\documentclass[11pt]{amsart}
\usepackage{amsmath}
\usepackage{amssymb}
\usepackage{amsfonts}
\usepackage{latexsym}
\usepackage{fancyhdr}
\usepackage{graphics}
\usepackage{color}
\setlength{\oddsidemargin}{.55in} \setlength{\evensidemargin}{.in}
\textwidth14.5cm \normalbaselineskip=14pt \normalbaselines
\headsep10mm

\newcommand{\mb}{\mathbb}
\newcommand{\mc}{\mathcal}

\newcommand{\M}{\mc M}

\newcommand{\D}{\mc D}

\def\H{\mc H}

\newcommand{\hh}{\mc H}

\newcommand{\N}{{\mc N}}

\newcommand{\SC}{\mathcal{S}({\mb R})}
\newcommand{\SSS}{\mathcal{S}^\times({\mb R})}

\newcommand{\LDD}{{\mc L}(\D,\D^\times)}
\newtheorem{defn}{Definition}[section]
\newtheorem{prop}[defn]{Proposition}
\newtheorem{thm}[defn]{Theorem}
\newtheorem{lemma}[defn]{Lemma}
\newtheorem{cor}[defn]{Corollary}
\newtheorem{example}[defn]{Example}
\newtheorem{rem}[defn]{Remark}
\def\x{\relax\ifmmode {\mbox{*}}\else*\fi}
\newcommand{\beex}{\begin{example}$\!\!${\bf }$\;$\rm }
\newcommand{\enex}{ \end{example}}
\newcommand{\berem}{\begin{rem}$\!\!${\bf }$\;$\rm }
\newcommand{\enrem}{ \end{rem}}

\newcommand{\bedefi}{\begin{defn}$\!\!${\bf }$\;$\rm }
\newcommand{\findefi}{\end{defn}}

\newcommand{\ip}[2]{\left\langle {#1}|{#2}\right\rangle}
\newcommand{\RN}{{\mb R}}

\newcommand{\LD}{{\mathcal L}^\dagger(\D)}
\begin{document}

\title[Distribution frames]{Distribution frames and bases}%

\author{Camillo Trapani}%
\address{Dipartimento di Matematica e Informatica, Universit\`a
di Palermo, I-90123 Palermo (Italy)} \email{camillo.trapani@unipa.it}
\author{Salvatore Triolo}%
\address{Dipartimento DEIM, Universit\`a di Palermo, I-90123 Palermo (Italy)}%
\email{salvatore.triolo@unipa.it}
\author{Francesco Tschinke}%
\address{Dipartimento di Matematica e Informatica, Universit\`a
di Palermo, I-90123 Palermo (Italy)} \email{francesco.tschinke@unipa.it}

\begin{abstract}
\noindent In this paper we will consider, in the abstract setting of rigged Hilbert spaces, distribution valued functions and we will investigate conditions for them to constitute a ''continuous basis" for the smallest space $\D$ of a rigged Hilbert space. This analysis requires suitable extensions of familiar notions as those of frames, Riesz bases and orthonormal bases. A motivation for this study comes from the Gel'fand-Maurin theorem which states, under certain conditions, the existence of a family of generalized eigenvectors of an essentially self-adjoint operator on a domain $\D$ which acts like an orthonormal basis of the Hilbert space $\H$. The corresponding object will be called here a {\em Gel'fand distribution basis}. The main results are obtained in terms of properties of a conveniently defined {\em synthesis operator}.

\smallskip
\noindent{\bf Keywords:} Distributions; Rigged Hilbert spaces; Frames; Bases.
\end{abstract}

\subjclass[2010]{Primary 47A70; Secondary 42C15, 42C30 }
\maketitle
\section{Introduction}
As is well-known, the Gel'fand-Maurin theorem \cite{gelf} states that if $\D$ is a domain in a Hilbert space $\H$ which is a nuclear space under a certain topology $\tau$ and $A$ is an essentially self-adjoint operator in $\D$ which maps $\D[\tau]$ into $\D[\tau]$ continuously, then $A$ admits a {\em complete  set of generalized eigenvectors} (for a complete proof see also \cite{gould}).
In this case $A$ has a continuous extension $\hat{A}$ given by the conjugate duality (the adjoint, in other words; i.e. $\hat{A}=A^\dag$) from the conjugate dual space $\D^\times$ into itself.
A \textit{generalized eigenvector} of $A$, with eigenvalue $\lambda\in\mathbb C$, is then an eigenvector of $\hat{A}$; that is, a conjugate linear functional $\Phi_\lambda\in\D^\times$ such that:
$$
\ip{\Phi_\lambda}{Af}=\lambda \ip{\Phi_\lambda}{f},\quad\forall f\in\D.
$$
The above equality can be read as $\hat{A}\Phi_\lambda=A^\dag\Phi_\lambda=\lambda\Phi_\lambda$.
The completeness of the set $\{\Phi_\lambda; \lambda \in \sigma(\overline{A})$\} is understood in the sense that  the Parseval identity holds, that is
$$ \|f\|= \left(\int_{\sigma(\overline{A})} |\ip{f}{\Phi_\lambda}|^2 d\lambda\right)^{1/2}, \quad \forall f\in \D,$$
which also gives account of a kind of {\em orthogonality} of the $\Phi_\lambda$'s (a family $\{\Phi_\lambda\}$ with this property will here be called a {\em Parseval frame}).
To each $\lambda$ there corresponds the subspace $\D^\times_\lambda \subset \D^\times$ of all  generalized eigenvectors whose eigenvalue is $\lambda$. For all $f\in\D$ it is possible to define a linear functional $\tilde{f}_\lambda$ on $\D^\times_\lambda$ by $\tilde{f}_\lambda(\Phi_\lambda):=\ip{\Phi_\lambda}{f}$ for all $\Phi_\lambda\in\D_\lambda^\times$. The correspondence $\D\rightarrow\D_\lambda^{\times\times}$ defined by $f\mapsto \tilde{f}_\lambda$ is called the \textit{spectral decomposition of the element $f$ corresponding to $A$}. If $\tilde{f}_\lambda\equiv 0$ implies $f=0$ (i.e. the map $f\mapsto \tilde{f}_\lambda$ is injective) then $A$ is said to have \textit{a complete system of generalized eigenvectors}.

A consequence of the Gel'fand-Maurin theorem is the \textit{spectral expansion theorem} that plays an important role in quantum mechanics (see for instance \cite{messiah}): given the rigged Hilbert space
$$
\mathcal S(\mathbb R^n)\hookrightarrow  L^2(\mathbb R^n)\hookrightarrow\mathcal S^\times(\mathbb R^n),
$$
 ($\hookrightarrow $ stands for a continuous
embedding) the  Hamiltonian operator $H$ is  an essentially self-adjoint operator on $\mathcal S(\mathbb R^n)$,  with self-adjoint extension $\overline{H}$ on the domain $\D(\overline H)$, dense in $L^2(\mathbb R^n)$.
The spectral expansion theorem  in the form usually written by physicists asserts
 that there exists  an orthonormal system of
 eigenvectors $\{u_{m,s}\}\subset L^2(\mathbb R^n)$,
a countable family of  pairs
$\{(\sigma_{ i},\mu_i)\}$,  where $\sigma_{ i}\subset\mathbb R$ and $\mu_i$ are continuous measures on $\sigma_{ i}$,
and \textit{some families of generalized eigenvectors in $\mathcal S^\times(\mathbb R^n)$}:
 $\{u_{1,\alpha}\}_{\alpha\in\sigma_{1}}$, $\{u_{2,\alpha}\}_{\alpha\in\sigma_{2}}$, ..., $\{u_{i,\alpha}\}_{\alpha\in\sigma_{i}}$,...
 such that, for every $f\in  L^2(\mathbb R^n)$ the following decomposition holds:
$$
f=\sum_{m,s} c_{m,s}u_{m,s}+\sum_i\int_{\sigma_{c,i}} c_i(\alpha)u_{i,\alpha}d\mu_i(\alpha).
$$
Since this expansion is unique, the generalized eigenvectors $u_{i,\alpha}$ can be considered as a {\em distribution basis}.

The previous discussion makes clear that the natural environment to consider this kind of questions is that of {\em rigged Hilbert spaces} or {\em Gel'fand triplets} constructed by endowing a dense subspace $\D$ of Hilbert space $\H$ with a topology finer than that induced by $\H$ and taking the conjugate dual $\D^\times$; this produces the triplet of spaces
\begin{equation}
\D[t] \hookrightarrow  \H \hookrightarrow\D^\times[t^\times].
\end{equation}
Some results on bases in rigged Hilbert spaces were already obtained (for the discrete case) in \cite{gb_ct_riesz}.

 A typical, albeit elementary, example of this situation is provided by the derivative operator in rigged Hilbert spaces of distributions. Let $$ \SC \subset L^2({\mb R})\subset \SSS$$
be the rigged Hilbert space constructed starting from the Schwartz space $\SC$ of rapidly decreasing $C^\infty$-functions on ${\mb R}$. The space $\SC$ is nuclear and the operator $A=i\frac{d}{dy}:\SC\rightarrow\SC$, is essentially self-adjoint in $\SC$. The operator $A$ has the family of functions $\omega_x(y)= \frac{1}{\sqrt{2\pi}}e^{-ixy}$, $x\in {\mb R}$ as a complete system of generalized eigenvectors ($\omega_x$ is a generalized eigenvector with eigenvalue $x$), and the spectral decomposition of  $f$ is defined by the Fourier transform of $f\in \SC$. It is clear that each function $\omega_x$ can be viewed as a regular distribution of $\SSS$, in the sense that, for every fixed $x\in {\mb R}$,
$$ \check{\phi}(x) = \ip{\phi}{\omega_x} =\int_\RN \phi(y)\overline{\omega_x(y)}dy =\frac{1}{\sqrt{2\pi}}\int_\RN \phi(y)e^{ixy}dy$$
defines a continuous linear functional on $\SC$.

If  $\Phi\in\SSS$ is the regular distribution determined by a function $g\in L^2(\mathbb R)$, i.e., $\Phi_g(\phi):=\int_\RN g(y)\phi(y) dy$, then,
\begin{equation}\label{eqn_fourier}
\Phi_g(\phi)=\int_\RN\phi(y) {\Bigl(}\frac{1}{\sqrt{2\pi}}\int_\RN\hat{g}(x)e^{ixy}dx{\Bigr )}dy=\int_\RN\hat{g}(x) \check{\phi}(x)dx=\int_\RN\hat{g}(x)\ip{\phi}{\omega_x}dx.
\end{equation}
This means that $\Phi_g$ can be {\em expanded} in terms of the distributions $\omega_x$, $x \in {\mb R}$ that can be considered as basis vectors.
The Fourier-Plancherel theorem ensures of the completeness of the family $\{\omega_x\}$ in the sense described above.

The family $\{\omega_x;\,x \in {\mb R}\}$ of the previous example can be regarded, obviously, as the range of a weakly measurable function $\omega: {\mb R}\to \SSS$ which allows a representation \eqref{eqn_fourier}  of any $\phi\in \SC$ enjoying the completeness in the sense of Gel'fand. We will take it as a prototype of what we will call {\em Gel'fand distribution bases}.

Orthonormal bases in a Hilbert space are nowadays considered just as particular cases of more general families (Riesz bases, frames, semi-frames, etc) having in common the possibility of {\em reconstructing} vectors of the space as the superposition of more 'elementary' vectors, renouncing often to the uniqueness of the representation. Bases and frames have been studied in harmonic analysis in various spaces of functions and distributions and the appearance of some generalization in rigged Hilbert spaces (or even in more general structures) has also been recorded \cite{jpa_ct_rppip, gb_ct_riesz,feich2, Kpet}. The aim of the present paper is to propose possible extension of some of these notions in rigged Hilbert spaces and study their mutual relationships. The advantage of this approach is probably in its flexibility: as we have seen in the example of the Fourier transform, expansions of vectors in term of objects that live beyond the Hilbert space can be useful and some bonds which usually come from the geometry of the Hilbert space may disappear. For instance,  in a separable Hilbert space orthonormal bases as well as Riesz bases are both necessarily countable and also in more general situations Riesz bases cannot be {\em continuous}, but they are {\em discrete}, in some sense \cite{hosseini, jacobsen, bal_ms_2} (while continuous frames in Hilbert spaces do really exist). The corresponding objects we are going to introduce in our set-up (i.e., Gel'fand distribution bases and Riesz distribution bases, frames) can be continuous as it happens for the family of functions $\omega_x(y)= \frac{1}{\sqrt{2\pi}}e^{-ixy}$, $x\in {\mb R}$, which is indeed a Gel'fand distribution basis of $L^2(\RN)$ (Example \ref{ex_3.15} below). The drawback of going beyond the Hilbert space consists mainly in the loss of regularity of the families of functions forming the frame (or basis) we are dealing with. This is the price one has to pay and deciding if it is worth or not adopting this approach depends essentially on the applications one is considering.

The paper is organized as follows. After some preliminaries (Section \ref{sect_2}), we study $\D^\times$-valued measurable functions, called {\em distribution functions}, which are Bessel maps with respect to the topology of $\D$ and distinguish the case when they are bounded with respect to the Hilbert norm. Among the latter ones, we consider {\em distribution frames}: as in the standard case they produce a {\em reconstruction formula} which allows to express every element of $\D$ in terms of elements of the frame and of its dual frame. This is done in Section \ref{sect_3}, where we also look for more suitable substitutes of orthonormal bases in this framework.
These are the Gel'fand distribution bases discussed above. A further step leads to consider Riesz distribution bases. What essentially distinguishes the various cases is the behavior of the synthesis operator related to each of them. In Section \ref{sect_4} we analyze operators defined by Gel'fand and Riesz distribution bases and examine, more generally the case where the action of an operator can be described by a Bessel distribution map in the spirit of \cite{feich, gavruta}.

Section \ref{sect_5} is devoted to the study of {\em coefficient spaces} defined by general pairs of distribution functions and we show that they enjoy nice duality properties exactly when a generalized {\em frame operator} is a topological isomorphism of $\D$ onto $\D^\times$. This leads to introducing the notion of {\em compatible pairs} of distribution valued function which could be taken as a reasonable generalization of the notion of {\em reproducing pairs} introduced in \cite{bal_ms_0, bal_ms} and studied also in \cite{jpa_ms_ct, jpa_ct_rppip}. It is worth mentioning that Feichtinger and Zimmermann \cite[Section 3.5]{feich 3} introduced objects similar to distribution
frames and compatible pairs in their study of dual Gabor systems
with windows taken from modulation spaces. These in fact consist of tempered distributions whose short-time Fourier transforms enjoy certain regularity conditions (see \cite[Section 8.3]{at_pip} for a short description).

\medskip

\section{Preliminary definitions and facts} \label{sect_2}
  Let $\D$ be a dense subspace of  $\H$.  A locally convex topology $t$ on $\D$ finer than the topology induced by the Hilbert norm defines, in standard fashion,
a {\em rigged Hilbert space}
\begin{equation}\label{eq_one_intr}
\D[t] \hookrightarrow  \H \hookrightarrow\D^\times[t^\times],
\end{equation}
 where $\D^\times$  is the vector space of all continuous conjugate
linear functionals on $\D[t]$, i.e., the conjugate dual of $\D[t]$,
endowed with the {\em strong dual topology} $t^\times=
\beta(\D^\times,\D)$, which can be defined by the seminorms
\begin{equation}\label{semin_Dtimes}
q_\M(F)=\sup_{g\in \M}|\ip{F}{g}|, \quad F\in \D^\times,
\end{equation}
where $\M$ is a bounded subsets of $\D[t]$.

Since the Hilbert space $\H$ can be identified  with a
subspace of $\D^\times[t^\times]$, we will systematically read
\eqref{eq_one_intr} as a chain of topological inclusions: $\D[t]
\subset  \H \subset\D^\times[t^\times]$.  These identifications
imply that the sesquilinear form $B( \cdot , \cdot )$ which puts $\D$
and $\D^\times$ in duality is an extension of the inner product of
$\H$;
 i.e. $B(\xi, \eta) = \ip{\xi}{\eta}$, for every $\xi, \eta \in \D$ (to simplify notations we adopt the symbol $\ip{\cdot}{\cdot}$ for both of
 them) and also that the embedding map $I_{\D,\D^\times}:\D\to \D^\times$ can be taken to act on $\D$ as $I_{\D,\D^\times}f=f$ for every $f \in \D$.

 Let now $\D[t] \subset \H \subset \D^\times[t^\times]$ be a rigged
Hilbert space,
 and let $\LDD$ denote the vector space of all continuous linear maps from $\D[t]$ into  $\D^\times[t^\times]$. {If $\D[t]$ is barreled (e.g., reflexive)}, an involution $X \mapsto X^\dag$ can be introduced in $\LDD$  by the equality
$$ \langle X^\dag \eta| \xi\rangle = \overline{\ip{X\xi}{\eta}}, \quad \forall \xi, \eta \in \D.$$  Hence, in this case, $\LDD$ is a $^\dagger$-invariant vector space.

If $\D[t]$ is a {smooth} space (e.g., Fr\'echet and reflexive), then $\LDD{}$ is a quasi *-algebra over $\LD$ \cite[Definition 2.1.9]{ait_book}.

We also denote by ${\mc L}(\D)$ the algebra of all continuous linear operators $Y:\D[t]\to \D[t]$ and by ${\mc L}(\D^\times)$ the algebra of all continuous linear operators $Z:\D^\times[t^\times]\to \D^\times[t^\times]$.
If $\D[t]$ is reflexive, for every $Y \in {\mc L}(\D)$ there exists a unique operator $Y^\times\in {\mc L}(\D^\times)$, the adjoint of $Y$, such that
$$ \ip{\Phi}{Yg} = \ip{Y^\times \Phi}{g}, \quad\forall \Phi \in \D^\times, g \in \D.$$ In a similar way, an operator $Z\in {\mc L}(\D^\times)$ has an adjoint $Z^\times\in {\mc L}(\D)$ such that $(Z^\times)^\times=Z$.

In the sequel we will need the following
\begin{lemma} \label{lemma_inverse} Let $\D[t] \subset \H \subset \D^\times[t^\times]$ be a rigged Hilbert space with $\D[t]$ Fr\'echet and reflexive. Let $S\in \LDD$ with $S=S^\dag$

The following statements are equivalent.
\begin{itemize}
\item[(i)] There exists $R\in {\mc L}(\D)$ such that $SR=R^\times S= I_{\D,\D^\times}$.
\item[(ii)]For every bounded subset $\M$ of $\D[t]$, there exist $A>0$ and a bounded subset $\N$ of $\D[t]$ such that
\begin{equation}\label{eqn_lemma} A^{1/2} \sup_{g\in \M}|\ip{f}{g}| \leq \sup_{h\in \N}|\ip{Sf}{h}| , \quad \forall f\in\D.\end{equation}
\end{itemize}
\end{lemma}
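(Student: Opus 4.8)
The plan is to establish the two implications separately: $(i)\Rightarrow(ii)$ is a short computation, while $(ii)\Rightarrow(i)$ is the substantial direction and amounts to building $R$ by extending a map off a dense subspace.

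For $(i)\Rightarrow(ii)$ I would use $R^\times S=I_{\D,\D^\times}$ to write $\ip{f}{g}=\ip{R^\times Sf}{g}=\ip{Sf}{Rg}$ for all $f,g\in\D$; since $R\in\mc L(\D)$ sends the bounded set $\M$ to the bounded set $R(\M)$, taking suprema yields $\sup_{g\in\M}|\ip{f}{g}|=\sup_{h\in R(\M)}|\ip{Sf}{h}|$, so \eqref{eqn_lemma} holds (indeed with equality) with $\N=R(\M)$ and $A=1$.

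For $(ii)\Rightarrow(i)$ I would argue as follows. First, \eqref{eqn_lemma} applied to the singletons $\M=\{g\}$, $g\in\D$, together with the density of $\D$ in $\H$, shows that $S$ is injective. Next, since $S=S^\dag$, a vector $g\in\D$ with $\ip{Sf}{g}=0$ for all $f\in\D$ satisfies $\ip{Sg}{f}=0$ for all $f$, hence $Sg=0$ and $g=0$; thus $S(\D)$ has trivial annihilator in $\D$ and, $\D$ being reflexive, $S(\D)$ is dense in $\D^\times[t^\times]$. Now the point: writing $\Phi=Sf$, the estimate \eqref{eqn_lemma} reads $q_\M(S^{-1}\Phi)\le A^{-1/2}q_\N(\Phi)$ for $\Phi\in S(\D)$, where $q_\M(\cdot)=\sup_{g\in\M}|\ip{\cdot}{g}|$; since the $q_\M$ (over bounded $\M\subset\D$) are exactly the seminorms defining $t^\times$, this says precisely that $S^{-1}$ is continuous from $S(\D)$, with the topology induced by $\D^\times$, \emph{into} $\D^\times[t^\times]$. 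Because $\D[t]$ is Fréchet, $\D^\times[t^\times]$ is complete, so this continuous linear map extends uniquely from the dense subspace $S(\D)$ to an operator $T\in\mc L(\D^\times)$, and by construction $T(Sf)=f$ for every $f\in\D$, i.e. $TS=I_{\D,\D^\times}$. Finally, reflexivity of $\D$ gives the adjoint $R:=T^\times\in\mc L(\D)$, for which $R^\times=T$, so $R^\times S=I_{\D,\D^\times}$; applying the $^\dag$-involution to this identity and using $S^\dag=S$, $(I_{\D,\D^\times})^\dag=I_{\D,\D^\times}$ and $(R^\times S)^\dag=SR$, one also obtains $SR=I_{\D,\D^\times}$. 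Hence $R$ witnesses $(i)$.

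The step I expect to be the crux is recognizing that \eqref{eqn_lemma}, read for $S^{-1}$, is the continuity of the partial inverse of $S$ as a map \emph{into} $\D^\times$ — into $\D[t]$ this would fail in general, since the $q_\M$ do not generate $t$ — and that the natural codomain for the extended operator is $\mc L(\D^\times)$, from which one returns to $\mc L(\D)$ by taking the adjoint. Everything else is routine bookkeeping with the $^\dag$-involution on $\mc L(\D,\D^\times)$ and the two mutually inverse adjoint maps $\mc L(\D)\leftrightarrow\mc L(\D^\times)$; note that metrizability of $\D[t]$ enters only through the completeness of $\D^\times[t^\times]$, while reflexivity enters through the existence of $T^\times$ and the agreement of weak and strong closures used above.
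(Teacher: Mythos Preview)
Your proposal is correct and follows essentially the same route as the paper. The only cosmetic differences are that, in $(i)\Rightarrow(ii)$, you take $\N=R(\M)$ directly while the paper invokes the continuity of $R^\times$ on $\D^\times$, and in $(ii)\Rightarrow(i)$ you deduce $SR=I_{\D,\D^\times}$ from $R^\times S=I_{\D,\D^\times}$ via the $\dag$-involution whereas the paper verifies $SR=I_{\D,\D^\times}$ by direct computation and then says the other identity is similar.
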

\begin{proof} (i)$\Rightarrow$(ii): We have, using the continuity of $R^\times$ from $\D^\times[t^\times]$ into itself,
$$ \sup_{g\in \M}|\ip{f}{g}|=\sup_{g\in \M}|\ip{SRf}{g}|=\sup_{g\in \M}|\ip{R^\times Sf}{g}|\leq c \sup_{h\in \N}|\ip{Sf}{h}|, \; \forall f\in \D,$$
for some $c>0$ and for some bounded subset $\N$ of $\D[t]$.

(ii)$\Rightarrow$(i): First we notice that \eqref{eqn_lemma} easily implies that $S$ is injective and since $S=S^\dag$ it has dense image in $\D^\times$. Hence $S^{-1}: {\sf Ran}\, S \to\D$ is well defined and, by \eqref{eqn_lemma},
$$A^{1/2} \sup_{g\in \M}|\ip{S^{-1}\Phi}{g}| \leq \sup_{h\in \N}|\ip{\Phi}{h}| , \quad \forall \Phi\in {\sf Ran}\, S.$$
Thus $S^{-1}: {\sf Ran}\, S [t^\times] \to\D [t^\times]$ is continuous and has a unique continuous extension $\widetilde{S^{-1}}$ from $\D^\times \to \D^\times$. We put $R:= \left(\widetilde{S^{-1}}\right)^\times$. Then $R$ satisfies the requirements in (i).
{Indeed, for every $f,g\in \D$,
\begin{align*}
\ip{SRf}{g} &= \ip{Rf}{Sg}=\ip{\widetilde{S^{-1}}^\times f}{Sg}\\
&=\ip{f}{{\widetilde{S^{-1}}}Sg}=\ip{f}{g},
\end{align*}
due to the fact that ${\widetilde{S^{-1}}}$ is an extension of $S^{-1}$. The proof that $R^\times S f=f$ is similar.}
\end{proof}

The notion of (continuous) frames in Hilbert spaces and its link with the theory of coherent states are well-known in the literature (see, e.g. \cite{AAG_book, AAG_paper, keiser}).

Let $(X,\mu)$ be a measure space with $\mu$ a $\sigma$-finite positive measure. A weakly measurable function $\zeta: x \in X \to \zeta_x\in\H$ is  a continuous frame in Hilbert space $\H$  if there exists $A,B>0$ such that:
\begin{equation}A\|f\|^2 \leq \int_X |\ip{f}{\zeta_x}|^2d\mu \leq B\|f\|^2, \quad \forall f \in \H.\end{equation}

Let $\D[t]\subset \H\subset \D^\times [t^\times]$ be a rigged Hilbert space and $x\in X\to \omega_x \in \D^\times$ a weakly measurable map; i.e. we suppose that, for every $f \in \D$, the complex valued function $x \mapsto \ip{f}{\omega_x}$ is $\mu$-measurable.
Since the form which puts $\D$ and $\D^\times$ in conjugate duality is an extension of the inner product of $\H$, we write $\ip{f}{\omega_x}$ for $\overline{\ip{\omega_x}{f}}$,  $f \in \D$.

\bedefi

\label{tandg}
 Let $\D[t]$ be a locally convex space, $\D^\times$ its conjugate dual and $\omega: x\in X\to \omega_x \in \D^\times$ a weakly measurable map, then:
\begin{itemize}
\item[i)] $\omega$ is \textit{total} if, $f \in \D $ and $\ip{f}{\omega_x}=0$ for almost every $x \in X$ implies $f=0$;
\item[ii)]$\omega$ is \textit{$\mu$-independent} if the unique measurable function $\xi$ on $\RN$ such that \\$\int_X \xi(x)\ip{g}{\omega_x} d\mu=0$, for every $g \in \D$, is $\xi(x)=0$ $\mu$-a.e.
\end{itemize}
\findefi

\bedefi
\label{basisf}
Given a rigged Hilbert space $\D\hookrightarrow\hh\hookrightarrow\D^\times$, a weakly measurable function $\omega: X\to \D^\times$ is called a {\em  distribution basis} for $\D$
 if, for every $f \in \D$, there exists a \emph{unique} measurable function $\xi_f$ such that:
$$\ip{f}{g}=\int_X \xi_f(x)\ip{\omega_x}{g} d\mu, \quad\forall g \in \D $$
and, for every $x \in  X$, the linear functional $f\in \D \to \xi_f(x)\in {\mb C}$ is continuous on $\D[t]$.
\findefi

\berem
If $\omega$ is a distribution basis, then it is $\mu$-independent. However a $\mu$-independent function $\omega$, need not be, in general, a distribution basis, but only uniqueness of the coefficient function $\xi_f(x)$ is guaranteed.
\enrem

If $\omega$ is a distribution basis, there exists a \textit{unique} weakly measurable map $\theta:X \to \D^\times$ such that $\xi_f(x)=\ip{f}{\theta_x}$, for every $f \in \D$. Hence the following identity holds:
\begin{equation}\label{schauder}
\ip{f}{g}=\int_X\ip{f}{\theta_x}\ip{\omega_x}{g} d\mu, \quad \forall f,g \in \D.
\end{equation}
We will call $\theta$ the conjugate (or dual) map of $\omega$. If $\theta$ is $\mu$-independent too, the roles of $\theta$ and $\omega$ in \eqref{schauder} are totally symmetric; hence,

\begin{prop}\label{ft}Let $\omega$ be a {distribution basis} and $\theta$ its conjugate map. If $\theta$ is $\mu$-independent, then  $\theta$ is also a distribution basis.
\end{prop}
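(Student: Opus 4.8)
The plan is to produce, for each $f\in\D$, the coefficient function required by Definition~\ref{basisf} and then verify the three properties it must satisfy (the reconstruction identity, the pointwise continuity/measurability, and uniqueness). By the symmetry apparent in \eqref{schauder}, the only reasonable candidate is $\eta_f(x):=\ip{f}{\omega_x}$, so the whole argument reduces to checking that this choice works.

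For the reconstruction identity I would start from \eqref{schauder}, take complex conjugates of both sides, and use that the pairing restricted to $\D\times\D$ is the conjugate-symmetric inner product of $\H$ together with the convention $\ip{f}{\omega_x}=\overline{\ip{\omega_x}{f}}$. This rewrites \eqref{schauder} as
\[
\ip{g}{f}=\int_X\ip{\theta_x}{f}\,\ip{g}{\omega_x}\,d\mu,\qquad\forall f,g\in\D,
\]
and, interchanging the names of $f$ and $g$ (legitimate since the identity holds for every pair), one gets exactly
\[
\ip{f}{g}=\int_X\ip{f}{\omega_x}\,\ip{\theta_x}{g}\,d\mu,\qquad\forall f,g\in\D,
\]
which is the required expansion of $f$ in terms of $\{\theta_x\}$ with coefficient function $\eta_f(x)=\ip{f}{\omega_x}$. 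Integrability of the new integrand is inherited from \eqref{schauder}, the two integrands having the same modulus. The remaining structural conditions are then immediate: $\omega$ is weakly measurable, so $x\mapsto\ip{f}{\omega_x}$ is measurable; and $\omega_x\in\D^\times$, so $f\mapsto\ip{f}{\omega_x}=\overline{\ip{\omega_x}{f}}$ is a continuous functional on $\D[t]$ for each fixed $x$.

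For uniqueness, suppose $\eta_f$ and $\eta'_f$ both serve as coefficient functions; then $\psi:=\eta_f-\eta'_f$ satisfies $\int_X\psi(x)\,\ip{\theta_x}{g}\,d\mu=0$ for every $g\in\D$. Writing $\ip{\theta_x}{g}=\overline{\ip{g}{\theta_x}}$ and conjugating, this becomes $\int_X\overline{\psi(x)}\,\ip{g}{\theta_x}\,d\mu=0$ for all $g\in\D$, which is precisely the condition in the definition of $\mu$-independence of $\theta$. Hence $\overline{\psi}=0$ $\mu$-a.e., so $\psi=0$ $\mu$-a.e., and uniqueness follows. This completes the verification that $\theta$ is a distribution basis; as a by-product, by uniqueness of the conjugate map (applied to $\theta$) one also sees that $\omega$ is the conjugate map of $\theta$, so the two maps are genuinely dual to each other.

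I do not expect a serious obstacle here: the argument is essentially a conjugation-and-relabelling bookkeeping on \eqref{schauder}. The only points requiring some care are matching the ``integral $=0$'' condition obtained in the uniqueness step with the exact placement of arguments in Definition~\ref{tandg}(ii) (which forces a single complex conjugation), and, throughout, keeping track of which slot of $\ip{\cdot}{\cdot}$ is linear and which is conjugate-linear.
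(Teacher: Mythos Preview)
Your proof is correct and follows exactly the approach the paper indicates: the paper does not give a detailed argument but simply observes, just before the proposition, that ``the roles of $\theta$ and $\omega$ in \eqref{schauder} are totally symmetric'' when $\theta$ is $\mu$-independent. You have faithfully unpacked that symmetry---the conjugation/relabelling of \eqref{schauder}, the continuity and measurability of $\eta_f(x)=\ip{f}{\omega_x}$, and the uniqueness via $\mu$-independence of $\theta$---so nothing is missing.
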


\section{Distribution frames and Gel'fand bases}\label{sect_3}
\subsection{Bessel distribution maps and distribution frames}

Let $\omega:x\in X\to \omega_x \in\D^\times$ be a weakly measurable function. We suppose that
the sesquilinear form
\begin{equation}\label{eqn_omega} \Omega(f,g)= \int_X \ip{f}{\omega_x}\ip{\omega_x}{g}d\mu\end{equation} is well defined on $\D\times \D$. Then clearly, for every $f\in \D$, $\int_X |\ip{f}{\omega_x}|^2 d\mu <\infty$.

\begin{prop}\label{prop2} Let $\D[t]\subset\H\subset \D^\times[t^\times]$ be a rigged Hilbert space, with $\D[t]$ a Fr\'{e}chet space, and $\omega: x\in X \to \omega_x\in \D^\times$ a weakly measurable function. The following statements are equivalent.
\begin{itemize}
\item[(i)] For every $f \in \D$,
$$ \int_X |\ip{f}{\omega_x}|^2d\mu<\infty.$$
\item[(ii)]there exists a continuous seminorm $p$ on $\D[t]$ such that:
\begin{equation}\label{eqn_bessel1}\left( \int_X |\ip{f}{\omega_x}|^2d\mu\right)^{1/2}\leq p(f), \quad \forall f \in \D.\end{equation}
 \item[(iii)] for every bounded subset $\mathcal M$ of $\D$ there exists $C_{\mathcal M}>0$ such that:
\begin{equation}
\label{disbessel}
\sup_{f\in\mathcal M}{\Bigr |}\int_X\xi(x)\ip{\omega_x}{f}d\mu{\Bigl |}\leq C_{\mathcal M}\|\xi\|_2, \quad \forall \xi\in L^2(X,\mu).
\end{equation}
  \end{itemize}
\end{prop}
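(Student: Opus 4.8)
The strategy is to prove the chain $(i)\Rightarrow(ii)\Rightarrow(iii)\Rightarrow(i)$; the implication $(ii)\Rightarrow(i)$ is immediate, since a continuous seminorm on $\D[t]$ is finite at every point, so \eqref{eqn_bessel1} forces the integral in (i) to be finite. The object linking the three statements is the \emph{analysis map}
\begin{equation*}
T\colon \D\to L^2(X,\mu),\qquad (Tf)(x)=\ip{f}{\omega_x},
\end{equation*}
which is well defined exactly under (i), since weak measurability of $\omega$ makes each $Tf$ a measurable function. Then (ii) is the assertion that $T$ is continuous from $\D[t]$ into the Banach space $L^2(X,\mu)$, while (iii) is an equicontinuity statement on the ``synthesis'' side, because $\int_X\xi(x)\ip{\omega_x}{f}\,d\mu=\int_X\xi(x)\overline{(Tf)(x)}\,d\mu$ whenever this integral is meaningful.

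The implication $(i)\Rightarrow(ii)$ is the heart of the argument and I would deduce it from the closed graph theorem, which applies because $\D[t]$ is Fr\'echet (this is precisely where the hypothesis enters) and $L^2(X,\mu)$ is a Banach, hence Fr\'echet, space. It thus suffices to check that $T$ has closed graph. If $f_n\to f$ in $\D[t]$ and $Tf_n\to h$ in $L^2(X,\mu)$, then for every fixed $x$ the continuity of the functional $\omega_x$ on $\D[t]$ gives $(Tf_n)(x)=\ip{f_n}{\omega_x}\to\ip{f}{\omega_x}=(Tf)(x)$, while a subsequence of $(Tf_n)$ converges $\mu$-a.e.\ to $h$; hence $Tf=h$ in $L^2(X,\mu)$, so the graph is closed. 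Consequently $T$ is continuous, and setting $p(f):=\|Tf\|_2$ (a continuous seminorm on $\D[t]$) gives \eqref{eqn_bessel1}. I expect this to be the only delicate step; once $T$ is known to be bounded the remaining implications are routine.

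For $(ii)\Rightarrow(iii)$, fix $f\in\D$ and $\xi\in L^2(X,\mu)$. By (ii) the function $x\mapsto\ip{\omega_x}{f}=\overline{(Tf)(x)}$ belongs to $L^2(X,\mu)$, so $\xi(x)\ip{\omega_x}{f}$ is integrable and Cauchy--Schwarz yields
\begin{equation*}
\left|\int_X\xi(x)\ip{\omega_x}{f}\,d\mu\right|\le\|\xi\|_2\,\|Tf\|_2\le\|\xi\|_2\,p(f).
\end{equation*}
If $\M$ is a bounded subset of $\D[t]$ then $C_\M:=\sup_{f\in\M}p(f)<\infty$, because a continuous seminorm is bounded on bounded sets, and \eqref{disbessel} follows with this constant.

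Finally, for $(iii)\Rightarrow(i)$, fix $f\in\D$ and apply (iii) to the singleton $\M=\{f\}$: there is $C>0$ with $\bigl|\int_X\xi(x)\ip{\omega_x}{f}\,d\mu\bigr|\le C\|\xi\|_2$ for every $\xi\in L^2(X,\mu)$. Using $\sigma$-finiteness of $\mu$ together with measurability of $x\mapsto\ip{f}{\omega_x}$, choose an increasing sequence $(E_n)$ of sets of finite measure, on each of which $x\mapsto\ip{f}{\omega_x}$ is bounded, with $\bigcup_n E_n=X$, and test the inequality against $\xi_n(x):=\ip{f}{\omega_x}\,\mathbf{1}_{E_n}(x)\in L^2(X,\mu)$. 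Since $\int_X\xi_n(x)\ip{\omega_x}{f}\,d\mu=\int_{E_n}|\ip{f}{\omega_x}|^2\,d\mu=\|\xi_n\|_2^2$, the inequality gives $\int_{E_n}|\ip{f}{\omega_x}|^2\,d\mu\le C^2$ for all $n$, and monotone convergence yields $\int_X|\ip{f}{\omega_x}|^2\,d\mu\le C^2<\infty$, which is (i). This closes the cycle and establishes the equivalence.
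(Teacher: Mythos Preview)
Your proof is correct and follows essentially the same route as the paper: the closed graph theorem for $(i)\Rightarrow(ii)$, Cauchy--Schwarz for $(ii)\Rightarrow(iii)$, and the $L^2$-duality/truncation argument for $(iii)\Rightarrow(i)$. The only cosmetic differences are that the paper phrases $(i)\Rightarrow(ii)$ in terms of closability (equivalent, for an everywhere-defined linear map, to having closed graph) and for $(iii)\Rightarrow(i)$ simply cites \cite[Ch.~6, Ex.~4]{rudin}, which is precisely the argument you spelled out.
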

\begin{proof} (i)$\Rightarrow$(ii):
Consider the map
$$f \in \D \to \ip{f}{\omega_x}\in L^2(X,\mu).$$
This map is closable. Indeed, if $\{f_n\}$ is a sequence in $\D$ converging to $0$ in the $t$-topology, then $\{f_n\}$ converges weakly to $0$; i.e., $\ip{f_n}{F} \to 0$, for every $F\in \D^\times$. Thus if $\ip{f_n}{\omega_x} \to h(x)\in L^2(X, \mu)$, then  necessarily $h(x)=0$ $\mu$-a.e. in $X$. The statement follows by applying the closed graph theorem.

(ii)$\Rightarrow$(iii): Using \eqref{eqn_bessel1}, we get, for every $\xi\in L^2(X, \mu)$
\begin{equation}\label{eqn_8p} {\Bigr |}\int_X\xi(x)\ip{\omega_x}{f}d\mu{\Bigl |}\leq \|\xi\|_2 p(f).\end{equation}
Then, if $\mathcal M$ is a  bounded subset  of $\D$, putting $C_{\mathcal M}:=\sup_{f\in\mathcal M}p(f)$ we get the desired inequality.

(iii)$\Rightarrow$(i): From (\ref{disbessel}) we get
$$
{\Bigr |}\int_X\xi(x)\ip{\omega_x}{f}d\mu{\Bigl |}\leq C_{\mathcal M}\|\xi\|_2\quad \forall f\in\mathcal M,\quad \forall\xi(x)\in L^2(X, \mu).
$$
Then it follows that  $\ip{\omega_x}{f}\in L^2(X,\mu)$ \cite[Ch. 6, Ex. 4]{rudin}.
\end{proof}

\bedefi Let $\D[t]\subset\H\subset \D^\times[t^\times]$ be a rigged Hilbert space, with $\D[t]$ a Fr\'{e}chet space. If any of the equivalent conditions of Proposition \ref{prop2} is satisfied, we say that $\omega$ is a {\em Bessel distribution map}.

In particular, a Bessel distribution map $\omega$ is called {\em bounded} if there exists $B>0$ such that
\begin{equation}\label{eqn_bess_ex} \int_X|\ip{f}{\omega_x}|^2d\mu \leq B \|f\|^2,\; \forall f\in \D.\end{equation}
\findefi
\beex \label{ex_33} If a measurable function $\omega$ satisfies the condition
\eqref{eqn_bess_ex} then it is automatically a Bessel distribution map. However, a Bessel distribution map $\omega$ need not be necessarily bounded. Indeed, let us consider the rigged Hilbert space $\SC \subset L^2(\RN) \subset \SSS$ and  the distribution function $\omega$ defined by $\omega_x =\frac{1}{\sqrt{2\pi}}(1+y^2) e^{ixy}$, $x,y\in \RN$.
An easy computation shows that, for every $f \in \SC$, $\ip{f}{\omega_x}= (\hat f - D_x^2 \hat f)(x)$, $x\in \RN$, where $\hat{}$ denotes the Fourier transform and $D_x$ the derivative operator with respect to $x$.
Hence,
$$ \int_\RN|\ip{f}{\omega_x}|^2dx = \int_\RN|(\hat f - D_x^2 \hat f)(x)|^2dx.$$
The right hand side is the square of the norm of $\hat f$ in the Sobolev space $W^{2,2}(\RN)$ \cite[Vol. II, Section IX.6]{reed1}. This norm is continuous in $\SC$ with its usual topology, and it is well known that this norm is not equivalent to the $L^2$-norm.
\enex

\begin{lemma}\label{lemma2} Let $\D[t]\subset\H\subset \D^\times[t^\times]$ be a rigged Hilbert space, with $\D[t]$ a Fr\'{e}chet space. If $\omega$ is a Bessel distribution map then, for every $f \in \D$, the integral:
$$ \int_X\ip{f}{\omega_x}\omega_xd\mu$$ converges to an element of $\D^\times$.

Moreover, the map $f\in \D \to \int_X\ip{f}{\omega_x}\omega_xd\mu \in \D^\times$ is continuous.
\end{lemma}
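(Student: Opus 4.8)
The plan is to interpret the integral $\int_X \ip{f}{\omega_x}\omega_x\,d\mu$ weakly: for each $f\in\D$ we define a conjugate-linear functional $\Phi_f$ on $\D$ by
\[
\ip{\Phi_f}{g} := \int_X \ip{f}{\omega_x}\,\ip{\omega_x}{g}\,d\mu, \qquad g\in\D,
\]
which is exactly the sesquilinear form $\Omega(f,g)$ appearing in \eqref{eqn_omega}. First I would check that this is well defined, i.e.\ that the integrand is in $L^1(X,\mu)$: by Cauchy--Schwarz in $L^2(X,\mu)$ this follows from $\ip{f}{\omega_x}\in L^2$ (which is condition (i) of Proposition \ref{prop2}) and $\ip{\omega_x}{g}\in L^2$ (the same condition applied to $g$, using $\ip{\omega_x}{g}=\overline{\ip{g}{\omega_x}}$). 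So $\Omega(f,\cdot)$ is a well-defined conjugate-linear functional on $\D$ for each fixed $f$.

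The core point is continuity of $g\mapsto\ip{\Phi_f}{g}$ on $\D[t]$, which identifies $\Phi_f$ as an element of $\D^\times$. Using the Bessel property in the form \eqref{eqn_bessel1}, there is a continuous seminorm $p$ on $\D[t]$ with $\left(\int_X|\ip{g}{\omega_x}|^2 d\mu\right)^{1/2}\le p(g)$ for all $g\in\D$; combining this with Cauchy--Schwarz gives
\[
|\ip{\Phi_f}{g}| \le \left(\int_X |\ip{f}{\omega_x}|^2 d\mu\right)^{1/2}\!\! \left(\int_X |\ip{g}{\omega_x}|^2 d\mu\right)^{1/2} \le \left(\int_X |\ip{f}{\omega_x}|^2 d\mu\right)^{1/2} p(g).
\]
Hence $\Phi_f\in\D^\times$, and this is the asserted convergence of the integral (it converges in the weak, hence in the strong, topology of $\D^\times$, since on $\D^\times$ weak-$\ast$ convergence of a net that is suitably bounded — here it suffices to note $\Phi_f$ is a genuine element, not a net — is not the issue; the integral is defined as the functional $\Phi_f$).

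For the second assertion, continuity of $f\mapsto\Phi_f$ from $\D[t]$ into $\D^\times[t^\times]$, I would use the seminorms \eqref{semin_Dtimes} describing $t^\times$: for a bounded set $\M\subset\D[t]$,
\[
q_\M(\Phi_f) = \sup_{g\in\M}|\ip{\Phi_f}{g}| \le \left(\int_X |\ip{f}{\omega_x}|^2 d\mu\right)^{1/2} \sup_{g\in\M} p(g) = C_\M\, \left(\int_X |\ip{f}{\omega_x}|^2 d\mu\right)^{1/2} \le C_\M\, p(f),
\]
with $C_\M=\sup_{g\in\M}p(g)<\infty$ since $p$ is continuous and $\M$ bounded. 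As $p$ is a continuous seminorm on $\D[t]$, this shows $f\mapsto\Phi_f$ is continuous.

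The only genuine subtlety — and the step I expect to need a word of care — is making precise the \emph{sense} in which $\int_X \ip{f}{\omega_x}\omega_x\,d\mu$ ``is'' the functional $\Phi_f$: namely that the weak integral exists as an element of $\D^\times$. Since $\D[t]$ is Fr\'echet and reflexive, $\D^\times[t^\times]$ is complete (as the strong dual of a reflexive Fr\'echet space), so it suffices that the scalar function $x\mapsto\ip{f}{\omega_x}\ip{\omega_x}{g}$ be $\mu$-integrable for every $g$ — already checked — and that the resulting functional be continuous — also checked; then $\Phi_f$ is by definition the value of the (weak) integral. Everything else is a routine application of Cauchy--Schwarz and Proposition \ref{prop2}.
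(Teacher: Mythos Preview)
Your argument is correct and essentially identical to the paper's: both use Cauchy--Schwarz together with the seminorm estimate \eqref{eqn_bessel1} from Proposition~\ref{prop2} to get $|\Omega(f,g)|\le p(f)p(g)$, from which membership in $\D^\times$ and continuity of $f\mapsto\Phi_f$ follow by taking the supremum over bounded $\M$. Your final paragraph about completeness of $\D^\times$ and reflexivity is unnecessary (and reflexivity is not assumed in this lemma): since $\Phi_f$ is directly exhibited as a continuous conjugate-linear functional on $\D$, it is by definition an element of $\D^\times$, with no limiting argument required.
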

\begin{proof}
 For every $f, g\in \D$, we have, by Proposition \ref{prop2},
\begin{equation}\label{eqn_bessel}\left|\int_X \ip{f}{\omega_x}\ip{\omega_x}{g}d\mu \right|\leq \|\ip{f}{\omega_x}\|_2 \|\ip{g}{\omega_x}\|_2 \leq p(f)p(g).\end{equation}
Hence $ \int_X\ip{f}{\omega_x}\omega_xd\mu \in \D^\times$.

The continuity of the map $f\in \D \to \int_X\ip{f}{\omega_x}\omega_xd\mu \in \D^\times$ follows by taking the sup of the previous inequality when $g$ runs over a bounded subset $\M$ of $\D$.
\end{proof}

{
Let $\omega$ be a Bessel distribution map and $\Omega$ the sesquilinear form defined in \eqref{eqn_omega}.
By \eqref{eqn_bessel}, we get
$$ |\Omega(f,g)|= \left| \int_X \ip{f}{\omega_x}\ip{g}{\omega_x}d\mu\right|\leq p(f)p(g), \quad \forall f,g \in \D$$
for some continuous seminorm $p$ on $\D[t]$.}
This means that $\Omega$ is jointly continuous on $\D[t]$. Hence there exists an operator $S_\omega \in \LDD$, with $S_\omega =S_\omega^\dag$, $S_\omega\geq 0$, such that
\begin{equation} \label{eqn_frameop}\Omega(f,g)=\ip{S_\omega f}{g}=\int_X \ip{f}{\omega_x}\ip{\omega_x}{g} d\mu, \quad \forall f,g\in \D\end{equation}
that is,
$$S_\omega f= \int_X \ip{f}{\omega_x}\omega_x d\mu, \quad \forall f \in \D.$$
If $\omega$ is a Bessel distribution map and $\xi\in L^2(X, \mu)$, we put
\begin{equation}\label{eqn_lambda_om}\Lambda_\omega^\xi(g):= \int_X \xi(x)\ip{\omega_x}{g}d\mu .\end{equation}
Then $\Lambda_\omega^\xi$ is a continuous conjugate linear functional on $\D$.
Hence, there exists a unique $\Phi^\xi_\omega\in \D^\times$ such that
$$ \ip{\Phi^\xi_\omega}{g}=\int_X\xi(x)\ip{\omega_x}{g}d\mu, \quad  \forall g\in\D.$$

Therefore we can define a linear map $T_\omega:L^2(X, \mu)\to \D^\times$, which will be called the {\em synthesis operator}, by
$$ T_\omega \xi = {\Phi^\xi_\omega}, \quad \xi \in L^2(X, \mu).$$
By \eqref{disbessel}, it follows that $T_\omega$ is continuous from $L^2(X, \mu)$, endowed with its natural norm, into $\D^\times[t^\times]$. Hence, it possesses a continuous adjoint $T_\omega^\times: \D[t]\to L^2(X, \mu)$, which is called the {\em analysis operator}, acting as follows:
$$T_\omega^\times: f\in \D \to \xi_f\in L^2(X, \mu), \mbox{ where } \xi_f(x)=\ip{f}{\omega_x}, \; x\in X.$$
As it is readily checked $S_\omega=T_\omega T_\omega^\times.$

We prove the following:
\begin{cor} \label{cor 5.6}
Assume that $\D[t]$ is a Fr\'echet space. If the weakly measurable map $\omega: x\in X\to \omega_x \in \D^\times$ is a Bessel distribution map
then for every continuous frame $\zeta$ on $\H$   there exists a continuous operator $Q:\hh\rightarrow \D^\times$ such that
 $Q f=\int_X\ip{f}{\zeta_x}\omega_xd\mu$, for every $f\in \H$.
\end{cor}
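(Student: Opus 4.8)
The plan is to recognize $Q$ as a composition of two maps that are already at our disposal: the \emph{analysis operator} of the frame $\zeta$ and the \emph{synthesis operator} $T_\omega$ of the Bessel distribution map $\omega$. First I would recall that, since $\zeta$ is a continuous frame on $\H$ with bounds $A,B>0$, the map
$$C_\zeta: f\in \H \mapsto \big(\ip{f}{\zeta_x}\big)_{x\in X}$$
is a well-defined bounded linear operator from $\H$ into $L^2(X,\mu)$, with $\|C_\zeta f\|_2\leq B^{1/2}\|f\|$ for every $f\in \H$; well-definedness uses the weak measurability of $\zeta$ together with the upper frame inequality. On the other side, $\omega$ being a Bessel distribution map, the synthesis operator $T_\omega:L^2(X,\mu)\to \D^\times[t^\times]$ constructed above is continuous when $L^2(X,\mu)$ carries its natural norm topology (this is exactly what \eqref{disbessel} gives).

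Then I would simply set $Q:=T_\omega\circ C_\zeta$. As the composition of a bounded operator $\H\to L^2(X,\mu)$ with a continuous operator $L^2(X,\mu)\to \D^\times[t^\times]$, the map $Q$ is continuous from $\H$ into $\D^\times[t^\times]$, which is the continuity assertion of the statement. It remains only to identify $Q$ with the announced integral. For $f\in\H$ put $\xi_f:=C_\zeta f\in L^2(X,\mu)$, i.e. $\xi_f(x)=\ip{f}{\zeta_x}$. By the very definition of $T_\omega$ through \eqref{eqn_lambda_om}, for every $g\in\D$ one has
$$\ip{Qf}{g}=\ip{T_\omega \xi_f}{g}=\ip{\Phi^{\xi_f}_\omega}{g}=\int_X \xi_f(x)\ip{\omega_x}{g}\,d\mu=\int_X \ip{f}{\zeta_x}\ip{\omega_x}{g}\,d\mu,$$
which is precisely the assertion that $Qf=\int_X \ip{f}{\zeta_x}\omega_x\,d\mu$, the integral being understood weakly in $\D^\times$ (its convergence to an element of $\D^\times$ being part of the construction of $T_\omega$, since $\xi_f\in L^2(X,\mu)$).

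I do not expect a genuine obstacle here: the whole argument is a bookkeeping of two continuous maps already built in the paper. The one point deserving a word of care is the \emph{domain}: the formula is claimed for all $f\in\H$, not merely for $f\in\D$, and this is exactly what the upper frame bound for $\zeta$ buys us, guaranteeing that $x\mapsto \ip{f}{\zeta_x}$ belongs to $L^2(X,\mu)$ for every $f\in\H$ so that $T_\omega$ may be applied to it. Everything else — measurability of the integrand, continuity of $Q$, the weak identity above — is routine once $Q$ is written as $T_\omega C_\zeta$.
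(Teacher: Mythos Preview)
Your proof is correct and is essentially identical to the paper's own argument: define $Q$ as the composition of the analysis operator of the frame $\zeta$ (your $C_\zeta$, the paper's $T_\zeta^\times$) with the synthesis operator $T_\omega$ of the Bessel distribution map, and invoke the continuity of each factor. The only difference is cosmetic notation and your added verification of the weak identity, which the paper leaves implicit.
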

\begin{proof}
If $\omega$ is a Bessel distribution map, by (iii) of Proposition \eqref{prop2} the operator $T_\omega: L^2(X,\mu)\to \D^\times$ defined by $T_\omega\xi:=\int_X\xi(x){\omega_x}d\mu$ is continuous. On the other hand, since $\zeta$ is a frame, the analysis operator $T_\zeta^\times:\hh\rightarrow L^2(X,\mu)$ defined by $T_\zeta^\times f=\ip{f}{\zeta_x}$ is continuous. The statement follows by defining $Q=T_\omega T_\zeta^\times$.
\end{proof}

\subsection{Bounded Bessel distribution maps}
{Let us now consider the case when $\omega$ is a bounded Bessel distribution map; i.e., \eqref{eqn_bess_ex} holds. Then, if $\xi\in L^2(X,\mu)$, $\Lambda_\omega^\xi$
is bounded in $\D[\|\cdot\|]$; hence, it has
 bounded extension $\tilde{\Lambda}_\omega^\xi$ to $\H$, defined, as usual, by a limiting procedure.

Therefore, there exists a unique vector $h_\xi\in \H$ such that
$$\tilde{\Lambda}_\omega^\xi (g)= \ip{h_\xi}{g}, \quad \forall g\in\H.$$
This implies that the synthesis operator $T_\omega$ takes values in $\H$, it is bounded and $\|T_\omega\|\leq B^{1/2}$; its hilbertian adjoint $D_\omega:=T_\omega^*$ extends the analysis operator $T_\omega^\times$.

 The action of $D_\omega$ can easily be described: if $g\in \H$ and  $\{g_n\}$ is a sequence of elements of $\D$, norm converging to $g$, then the sequence $\{\eta_n\}$, where $\eta_n(x)=\ip{g_n}{\omega_x}$, is convergent in $L^2(X, \mu)$. Put $\eta=\lim_{n\to \infty}\eta_n$. Then,
 \begin{equation}
\ip{T_\omega \xi}{g} =\lim_{n\to \infty}\int_X \xi(x)\ip{\omega_x}{g_n}d\mu=\int_X\xi(x)\overline{\eta(x)}d\mu.
\end{equation} Hence $T_\omega^*g=\eta.$

{The function $\eta\in L^2(X,\mu)$ depends linearly on $g$, for each $x\in X$. Thus we can define a linear functional $\check{\omega}_x$ by
\begin{equation}\label{eqn_check}\ip{g}{\check{\omega}_x}= \lim_{n\to \infty}\ip{g_n}{\omega_x}, \quad g\in \H;\, g_n\to g.\end{equation}
Of course, for each $x\in X$, $\check{\omega}_x$ extends $\omega_x$;
however $\check{\omega}_x$ need not be continuous, as a functional on $\H$.
We conclude that $T_\omega^*$ associates to each $g\in \H$ the coefficient function $\ip{g}{\check{\omega}_x}\in L^2(X, \mu).$}

Moreover, in this case, the sesquilinear form
$ \Omega$ in \eqref{eqn_frameop}, which is well defined on $\D\times \D$, is bounded with respect to $\|\cdot\|$ and possesses a bounded extension $\hat{\Omega}$ to $\H$.
Hence there exists a bounded operator $\hat{S}_\omega$ in $\H$, such that
\begin{equation} \label{eqn_ext_Omega} \hat{\Omega}(f,g) =\ip{\hat{S}_\omega f}{g}, \quad \forall f,g \in \H. \end{equation}
Since
\begin{equation} \label{eqn_frameop_1}\ip{\hat{S}_\omega f}{g}=\int_X \ip{f}{\omega_x}\ip{\omega_x}{g} d\mu, \quad \forall f,g\in \D,\end{equation}
 $\hat{S}_\omega$ extends the {frame operator} $S_\omega$ and $S_\omega:\D\to \H$.} It is easily seen that
$\hat{S}_\omega = \hat{S}_\omega^*$ and $\hat{S}_\omega=T_\omega T_\omega^*$.

{
\bedefi \label{defn_distribframe}  Let $\D[t]\subset\H\subset \D^\times[t^\times]$ be a rigged Hilbert space, with $\D[t]$ a reflexive Fr\`{e}chet space and $\omega$ a Bessel distribution map.
We say that $\omega$ is a {\em  distribution frame} if there exist $A,B>0$ such that
\begin{equation} \label{eqn_frame_main1} A\|f\|^2 \leq \int_X|\ip{f}{\omega_x}|^2d\mu \leq B \|f\|^2, \quad \forall f\in \D. \end{equation}
\findefi
}
A distribution frame $\omega$ is clearly a bounded Bessel map. Thus, we can consider the operator $\hat{S}_\omega$ defined in \eqref{eqn_ext_Omega}. It is easily seen that, in this case,
$$ A\|f\| \leq \| \hat{S}_\omega f\| \leq B\|f\|,\quad \forall f\in \H. $$ This inequality, together with the fact that $\hat{S}_\omega$ is symmetric, implies that $\hat{S}_\omega$ has a bounded inverse $\hat{S}_\omega^{-1}$ everywhere defined in $\H$.

\berem
It is worth noticing that the fact that $\Omega$ and $S_\omega$ extend to $\H$ does not mean that $\omega$ is a frame in the Hilbert space $\H$, because we do not know if the extension of ${S}_\omega$ has the form of \eqref{eqn_frameop} with $f,g \in \H$.
\enrem
\begin{lemma}\label{lemma_38}Let $\omega$ be a distribution frame. Then, there exists $R_\omega\in {\mc L}(\D)$ such that $S_\omega R_\omega f=R_\omega^\times S_\omega f= f$, for every $f\in \D$.
\end{lemma}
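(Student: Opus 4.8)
The plan is to obtain the statement from Lemma~\ref{lemma_inverse} applied to $S=S_\omega$. Since $\omega$ is a Bessel distribution map, the associated operator satisfies $S_\omega\in\LDD$ and $S_\omega=S_\omega^\dag$ (see~\eqref{eqn_frameop}), and $\D[t]$ is reflexive Fr\'echet by the definition of distribution frame, so Lemma~\ref{lemma_inverse} is applicable. Its condition~(i), for $S=S_\omega$, is exactly the assertion to be proved (recall $I_{\D,\D^\times}$ acts as the identity on $\D$), so the task reduces to verifying condition~(ii): for every bounded $\M\subset\D[t]$ there exist $A'>0$ and a bounded $\N\subset\D[t]$ with
$$ (A')^{1/2}\sup_{g\in\M}|\ip{f}{g}|\le\sup_{h\in\N}|\ip{S_\omega f}{h}|,\qquad\forall f\in\D .$$

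Before attacking~(ii) it is convenient to pin down the only possible candidate for $R_\omega$. A distribution frame is a bounded Bessel map, so $S_\omega$ extends to the bounded operator $\hat S_\omega$ of~\eqref{eqn_ext_Omega}, and~\eqref{eqn_frame_main1} gives $A\|f\|\le\|\hat S_\omega f\|\le B\|f\|$ for all $f\in\H$; hence $\hat S_\omega$ is boundedly invertible on $\H$, with $\hat S_\omega^{-1}\ge 0$, and in particular $\hat S_\omega$ is injective on $\D$. Consequently, if an $R_\omega\in{\mc L}(\D)$ with $S_\omega R_\omega f=f$ exists, then necessarily $R_\omega f=\hat S_\omega^{-1}f$, so the real content of the lemma is that $R_\omega:=\hat S_\omega^{-1}|_\D$ is a well-defined continuous operator of $\D[t]$ into itself. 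Granting this, both identities follow immediately: $S_\omega R_\omega f=\hat S_\omega\hat S_\omega^{-1}f=f$, and, for $g\in\D$, using the definition of $R_\omega^\times$, the relation $S_\omega=S_\omega^\dag$, and $\hat S_\omega^{-1}g\in\D$,
$$ \ip{R_\omega^\times S_\omega f}{g}=\ip{S_\omega f}{R_\omega g}=\ip{S_\omega f}{\hat S_\omega^{-1}g}=\ip{f}{\hat S_\omega\hat S_\omega^{-1}g}=\ip{f}{g}, $$
that is, $R_\omega^\times S_\omega f=f=I_{\D,\D^\times}f$.

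Thus everything comes down to showing that $\hat S_\omega^{-1}$ maps $\D$ into $\D$, continuously; this is precisely condition~(ii), and it is the step I expect to be the main obstacle. The continuity is not the delicate part: once the inclusion $\hat S_\omega^{-1}(\D)\subseteq\D$ is known, the closed graph theorem applies, since $\D[t]$ is Fr\'echet and the graph of $\hat S_\omega^{-1}|_\D$ is closed (as $\hat S_\omega^{-1}$ is bounded on $\H$ and $\D[t]\hookrightarrow\H$ continuously). The genuine difficulty is the inclusion itself. The natural line of attack is to write, for $f\in\D$ and $g$ in a bounded $\M\subset\D[t]$, $\ip{f}{g}=\ip{\hat S_\omega f}{\hat S_\omega^{-1}g}$ and then produce a bounded $\N\subset\D[t]$ absorbing $\hat S_\omega^{-1}(\M)$, which would give $\sup_{g\in\M}|\ip{f}{g}|\le\sup_{h\in\N}|\ip{S_\omega f}{h}|$; however~\eqref{eqn_frame_main1} bounds $\hat S_\omega^{-1}(\M)$ only in $\H$, not in $\D[t]$, so at this point one must bring in the reflexive Fr\'echet structure of $\D[t]$ (and, it seems, some additional regularity of $\omega$—for instance $S_\omega$ already mapping $\D$ into $\D$—in order to secure $\hat S_\omega^{-1}(\D)\subseteq\D$). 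That is where I would concentrate the effort.
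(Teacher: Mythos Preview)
Your reduction to Lemma~\ref{lemma_inverse} is exactly the paper's strategy, and your identification of the candidate $R_\omega=\hat S_\omega^{-1}\!\upharpoonright_\D$ is correct (the paper derives precisely this in the remark following the lemma). However, you have not verified condition~(ii); you say so yourself, and your suggested route---taking $\N$ to absorb $\hat S_\omega^{-1}(\M)$---is circular, since membership of $\hat S_\omega^{-1}(\M)$ in $\D$ is essentially the conclusion you are after. Your suspicion that an extra hypothesis such as $S_\omega(\D)\subseteq\D$ might be needed is unfounded.

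The missing idea is to exploit the explicit form $\ip{S_\omega f}{f}=\int_X|\ip{f}{\omega_x}|^2\,d\mu$ rather than to look for an $\N$ built from $\hat S_\omega^{-1}$. Since the topology induced on $\D$ by $t^\times$ is coarser than the Hilbert norm, every bounded $\M\subset\D[t]$ satisfies $\sup_{g\in\M}|\ip{f}{g}|^2\le C\|f\|^2$ for some $C>0$; the lower frame bound then gives $C\|f\|^2\le (C/A)\,\ip{S_\omega f}{f}$. The paper now sets $h=f/\bigl(\ip{S_\omega f}{f}\bigr)^{1/2}$ and $\N=\{h\}$, so that $\ip{S_\omega f}{f}=|\ip{S_\omega f}{h}|^2$, and the estimate in~(ii) follows with constant $A/C$. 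In short, the bounded set on the right-hand side of~(ii) is built from $f$ itself, not from $\hat S_\omega^{-1}g$. (You may wish to scrutinize the uniformity of this choice in $f$: the upper frame bound gives $\|h\|\le A^{-1/2}$, but it is boundedness in $\D[t]$, not in $\H$, that condition~(ii) requires.)
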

\begin{proof} Taking into account that the topology induced on $\D$ by the topology $t^\times$ of $\D^\times$ is coarser than that induced by the norm of $\H$, if $\M$ is bounded in $\D[t]$, we get, for some $C>0$,
$$ \sup_{g\in \M}|\ip{f}{g}|^2 \leq C\|f\|^2 \leq \frac{C}{A} \int_X|\ip{f}{\omega_x}|^2d\mu \leq  \frac{BC}{A} \|f\|^2.$$
Let us define $h=\frac{f}{\left(\int_X|\ip{f}{\omega_x}|^2 d\mu\right)^{1/2}}$ and $\N= \{h\}$, which is obviously a bounded set in $\D[t]$. Then
$$\int_X|\ip{f}{\omega_x}|^2d\mu = \sup_{h\in \N} \left|\int_X \ip{f}{\omega_x}\ip{\omega_x}{h}d\mu\right|^2.$$
Hence
$$ \sup_{g\in \M}|\ip{f}{g}| \leq \frac{C}{A} \sup_{h\in \N} \left|\int_X \ip{f}{\omega_x}\ip{\omega_x}{h}d\mu\right|.$$
By Lemma \ref{lemma_inverse}, there exists $R_\omega \in {\mc L}(\D)$ such that
$S_\omega R_\omega f= R_\omega^\times S_\omega f= f$, for every $f \in \D$.
\end{proof}

\berem The operator $R_\omega$ acts as an inverse of $S_\omega$. But the operator $\hat{S}_\omega$ has a bounded inverse $\hat{S}_\omega^{-1}$ everywhere defined in $\H$: How do $R_\omega$ and $\hat{S}_\omega^{-1}$ compare? As we have seen ${\sf Ran}\,S_\omega \subset \H$. Hence $\hat{S}_\omega^{-1}S_\omega f=f$ for every $f\in \D$. On the other hand, $R_\omega^\times S_\omega f=f$ for every $f\in \D$.
Hence $R_\omega^\times h= \hat{S}_\omega^{-1}h$ for every $h\in {\sf Ran}\,S_\omega$.  Moreover $ \hat{S}_\omega \hat{S}_\omega^{-1}f=f$ for every $f\in \D$. Using the fact that  $S_\omega R_\omega f=f$ for every $f\in \D$, we obtain, for $f\in \D$,  $ \hat{S}_\omega \hat{S}_\omega^{-1}S_\omega R_\omega f=S_\omega R_\omega f$. Hence $\hat{S}_\omega \hat{S}_\omega^{-1} f=\hat{S}_\omega R_\omega f$, which implies that $\hat{S}_\omega^{-1} f=R_\omega f$, for every $f\in \D$. Thus, $\hat{S}_\omega^{-1} $ maps $\D$ into $\D$ and $R_\omega=\hat{S}_\omega^{-1}\upharpoonright_\D$.

\enrem

\begin{prop} \label{prop_dual2} Let $\omega$ be a distribution frame. Then there exists a measurable function $\theta$ such that
$$ \ip{f}{g}= \int_X\ip{f}{\theta_x}\ip{\omega_x}{g}d\mu, \quad \forall f,g \in \D.$$
\end{prop}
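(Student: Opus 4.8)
The plan is to produce $\theta$ explicitly from the operator $R_\omega$ supplied by Lemma \ref{lemma_38}. Recall that, $\D[t]$ being a reflexive Fr\'echet space, Lemma \ref{lemma_38} gives $R_\omega \in {\mc L}(\D)$ with $S_\omega R_\omega f = R_\omega^\times S_\omega f = f$ for all $f\in\D$, and that $R_\omega$, as an element of ${\mc L}(\D)$, has a unique adjoint $R_\omega^\times \in {\mc L}(\D^\times)$ determined by $\ip{\Phi}{R_\omega g} = \ip{R_\omega^\times\Phi}{g}$ for $\Phi\in\D^\times$, $g\in\D$. I would then set
$$\theta_x := R_\omega^\times\omega_x \in \D^\times, \qquad x\in X.$$

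First I would verify that $\theta$ is weakly measurable and that the right-hand side of the asserted identity is meaningful. For fixed $f\in\D$ we have, by the convention $\ip{f}{\cdot}=\overline{\ip{\cdot}{f}}$ and the definition of $R_\omega^\times$, that $\ip{f}{\theta_x} = \overline{\ip{R_\omega^\times\omega_x}{f}} = \overline{\ip{\omega_x}{R_\omega f}} = \ip{R_\omega f}{\omega_x}$, and since $R_\omega f\in\D$ and $\omega$ is weakly measurable, $x\mapsto\ip{f}{\theta_x}$ is $\mu$-measurable. Moreover $\omega$, being a distribution frame, is a bounded Bessel distribution map, so $x\mapsto\ip{R_\omega f}{\omega_x}$ and $x\mapsto\ip{\omega_x}{g}$ both lie in $L^2(X,\mu)$; hence by the Cauchy--Schwarz inequality the product $\ip{f}{\theta_x}\ip{\omega_x}{g} = \ip{R_\omega f}{\omega_x}\ip{\omega_x}{g}$ is $\mu$-integrable for all $f,g\in\D$.

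Then the identity itself follows at once by applying the defining formula \eqref{eqn_frameop} of $S_\omega$ to the pair $(R_\omega f, g)$ and invoking Lemma \ref{lemma_38}:
$$\int_X\ip{f}{\theta_x}\ip{\omega_x}{g}\,d\mu = \int_X\ip{R_\omega f}{\omega_x}\ip{\omega_x}{g}\,d\mu = \ip{S_\omega R_\omega f}{g} = \ip{f}{g}, \qquad \forall f,g\in\D.$$
There is no serious obstacle here: the substantive content was already isolated in Lemma \ref{lemma_38} (existence of the one-sided inverse $R_\omega$ of $S_\omega$ on $\D$). The only points requiring a little care are choosing the correct ambient space for $\theta_x$ --- which is why one uses $R_\omega^\times$ acting on $\D^\times$ rather than attempting to invert $S_\omega$ as a map $\D\to\D^\times$ --- and noting that weak measurability of $\theta$ is inherited from that of $\omega$ because $R_\omega$ maps $\D$ continuously into $\D$. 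If desired one could add that $\theta$ is again a bounded Bessel distribution map, with bounds controlled by $\|\hat{S}_\omega^{-1}\|$, and hence (up to $\mu$-independence) a distribution frame in its own right, but this is not needed for the statement.
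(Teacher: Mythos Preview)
Your proof is correct and follows essentially the same approach as the paper: define $\theta_x = R_\omega^\times\omega_x$ using the operator $R_\omega$ from Lemma~\ref{lemma_38}, then compute $\int_X\ip{f}{\theta_x}\ip{\omega_x}{g}\,d\mu = \ip{S_\omega R_\omega f}{g} = \ip{f}{g}$. If anything, you are more careful than the paper in spelling out the weak measurability of $\theta$ and the integrability of the product.
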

\begin{proof} By \eqref{eqn_frameop} and using Lemma \ref{lemma_38}, we get
$$ \ip{f}{g}= \ip{S_\omega R_\omega f}{g}= \int_X\ip{R_\omega f}{\omega_x}\ip{\omega_x}{g}d\mu =\int_X\ip{f}{R_\omega^\times\omega_x}\ip{\omega_x}{g}d\mu, \quad \forall f,g \in \D.$$

Then, if we put $\theta_x=R_\omega^\times\omega_x$, $x\in X$,  we get the equality
$$ \ip{f}{g}= \int_X\ip{f}{\theta_x}\ip{\omega_x}{g}d\mu, \quad \forall f,g \in \D.$$
\end{proof}

This equality shows that every $f\in \D$ can be expanded (in weak sense) in terms of $\omega$, as it happens for usual frames in Hilbert space.
The function $\theta: x\in X \to \theta_x\in \D^\times$ is clearly weakly measurable. It is in fact a Bessel map since
$$ \int_X |\ip{f}{\theta_x}|^2d\mu = \int_X |\ip{f}{R_\omega^\times \omega_x}|^2d\mu = \int_X |\ip{R_\omega f}{\omega_x}|^2d\mu <\infty.$$
Then the frame operator $S_\theta$ for $\theta$ is well defined and we have, for every $f,g \in \D$,
\begin{align*}\ip{S_\theta f}{g}&= \int_X\ip{f}{\theta_x} \ip{\theta_x}{ g}d\mu = \int_X\ip{ f}{R_\omega^\times\omega_x} \ip{R_\omega^\times \omega_x}{ g}d\mu \\
&= \int_X\ip{R_\omega f}{\omega_x} \ip{\omega_x}{ R_\omega g}d\mu
=\ip{S_\omega R_\omega f}{R_\omega g}\\ &= \ip{R_\omega^\times S_\omega R_\omega f}{ g} =\ip{I_{\D,\D^\times}R_\omega f}{g}.
\end{align*}
Hence $S_\theta= I_{\D,\D^\times}R_\omega$.

{Let us now show that the distribution function  $\theta$, constructed in Proposition \ref{prop_dual2}, is also a distribution frame, called the {\em canonical dual frame} of $\omega$}. Indeed, we have
$$ A\|R_\omega f\|^2 = A\ip{R_\omega f }{R_\omega f} \leq \ip{S_\omega R_\omega f}{R_\omega f}= \ip{f}{R_\omega f}\leq \|f\|\|R_\omega f\|.$$
Hence $A\|R_\omega f\|\leq \|f\|$ and
$$ \ip{S_\theta f }{ f} =\ip{I_{\D,\D^\times}R_\omega f}{f}=\ip{R_\omega f}{f}\leq \|R_\omega f\|\|f\| \leq A^{-1}\|f\|^2.$$
Moreover,
\begin{align*}\|f\|^4 &=\ip{R_\omega^\times S_\omega f}{f}^2 = \ip{S_\omega f}{R_\omega f}^2 \\ &\leq \ip{S_\omega f}{f} \ip{S_\omega R_\omega f}{R_\omega f}\\ & \leq B\|f\|^2\ip{f}{R_\omega f}
=B\|f\|^2\ip{S_\theta f}{ f} \end{align*}
In conclusion,
$$ B^{-1}\|f\|^2 \leq \ip{S_\theta f}{ f} \leq A^{-1}\|f\|^2, \quad \forall f\in \D$$
and therefore $\theta$ is a distribution frame.

\medskip
We conclude this section with the following
\begin{lemma}\label{lemmanew} Let $(X,\mu)$ be a measure space with $\mu$ a $\sigma$-finite measure. Suppose that $\omega$ is a measurable distribution map for which there exists $A,B>0$ such that
$$ A\|\varphi\|_2^2 \leq \left\| \int_X \varphi(x) \omega_x d\mu\right\|^2\leq B \|\varphi\|_2^2, \quad \forall \varphi \in L^2(X,\mu).$$
If $\xi$ is a measurable function such that $\int_X \xi(x) \omega_x d\mu$ exists in $\H$, then $\xi \in L^2(X,\mu)$.
\end{lemma}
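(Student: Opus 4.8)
The plan is to distill the hypothesis into a single estimate of the form $\int_X|\xi(x)||\varphi(x)|\,d\mu\le C\|\varphi\|_2$ for all $\varphi\in L^2(X,\mu)$, and then to test it against a truncation of $\xi$. First observe that the hypothesis makes $\varphi\mapsto\int_X\varphi(x)\omega_x\,d\mu$ a bounded operator $T_\omega\colon L^2(X,\mu)\to\H$ which is bounded below; in particular $\omega$ is a bounded Bessel distribution map, so the analysis operator $D_\omega=T_\omega^{*}\colon\H\to L^2(X,\mu)$ is bounded with $D_\omega g(x)=\ip{g}{\check{\omega}_x}$, where $\check{\omega}_x$ is the extension of $\omega_x$ from \eqref{eqn_check}. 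Being bounded below, $T_\omega$ has closed range, so $D_\omega=T_\omega^{*}$ is surjective onto $L^2(X,\mu)$; restricting $D_\omega$ to the orthogonal complement of its kernel (a closed subspace of $\H$) one gets a topological isomorphism onto $L^2(X,\mu)$, hence a bounded right inverse $J\colon L^2(X,\mu)\to\H$ with $D_\omega J=\mathrm{id}$.

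Now put $h:=\int_X\xi(x)\omega_x\,d\mu\in\H$. That this integral exists in $\H$ means that for every $g\in\H$ the function $x\mapsto\xi(x)\ip{\check{\omega}_x}{g}=\xi(x)\overline{D_\omega g(x)}$ is $\mu$-integrable, with integral $\ip{h}{g}$; in particular the functional
$$\Theta(g):=\int_X|\xi(x)|\,|D_\omega g(x)|\,d\mu$$
is finite for every $g\in\H$. It is absolutely homogeneous and subadditive, and it is lower semicontinuous on $\H$: if $g_k\to g$ in $\H$ then $D_\omega g_k\to D_\omega g$ in $L^2(X,\mu)$, hence a.e. along a subsequence, and Fatou's lemma gives $\Theta(g)\le\liminf_k\Theta(g_k)$. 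A finite, lower semicontinuous seminorm on a Banach space is continuous by the Baire category theorem, so there is $C_0>0$ with $\Theta(g)\le C_0\|g\|$ for all $g\in\H$. Composing with $J$ yields $\int_X|\xi(x)||\varphi(x)|\,d\mu=\Theta(J\varphi)\le C_0\|J\varphi\|\le C_0\|J\|\,\|\varphi\|_2$ for every $\varphi\in L^2(X,\mu)$, i.e. the desired estimate with $C:=C_0\|J\|$.

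To conclude, use $\sigma$-finiteness: pick $X_n\uparrow X$ with $\mu(X_n)<\infty$ and set $E_n:=X_n\cap\{x:|\xi(x)|\le n\}$, so that $\overline{\xi}\,\chi_{E_n}\in L^2(X,\mu)$ and $E_n\uparrow X$ off a $\mu$-null set (as $\xi$ is finite a.e.). Taking $\varphi=\overline{\xi}\,\chi_{E_n}$ in the estimate above gives $\int_{E_n}|\xi|^2\,d\mu\le C\left(\int_{E_n}|\xi|^2\,d\mu\right)^{1/2}$, hence $\int_{E_n}|\xi|^2\,d\mu\le C^2$; letting $n\to\infty$, the monotone convergence theorem yields $\int_X|\xi|^2\,d\mu\le C^2<\infty$, so $\xi\in L^2(X,\mu)$. (Equivalently, one may run the truncation the other way: $T_\omega\xi_n$, with $\xi_n:=\xi\,\chi_{E_n}$, are the truncated synthesis integrals, $A\|\xi_n\|_2^2\le\|T_\omega\xi_n\|^2$ by the lower bound, and one needs $\sup_n\|T_\omega\xi_n\|<\infty$, followed by Fatou.)

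The delicate point is the assertion that $\Theta(g)<\infty$ for \emph{every} $g\in\H$ — equivalently, that $\sup_n\|T_\omega\xi_n\|<\infty$. For $g\in\D$ the finiteness of $\Theta(g)$ is automatic, since there $D_\omega g(x)=\ip{g}{\omega_x}$ and the integral defining $h$ is absolutely convergent; the substance is that this persists on all of $\H$, i.e. that the weak integral of $\xi\,\omega$ pairs boundedly against the whole Hilbert space through the extensions $\check{\omega}_x$, not merely against $\D$. This is exactly the force of the phrase ``exists in $\H$'' as opposed to ``exists in $\D^{\times}$'', and it is what allows the Baire category theorem to be applied on the complete space $\H$: the normed space $(\D,\|\cdot\|)$ is not barrelled, so pointwise control on $\D$ alone would not upgrade to a norm bound. (If one reads ``exists in $\H$'' as convergence in $\H$ of the truncated integrals $T_\omega\xi_n$, the bound $\sup_n\|T_\omega\xi_n\|<\infty$ is immediate and this subtlety disappears.) Everything else — the truncation, the use of the lower frame-type bound, and the monotone-convergence step — is routine.
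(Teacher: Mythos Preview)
Your argument is correct and takes a genuinely different route from the paper. The paper reduces to nonnegative $\xi$, approximates from below by simple functions $s_n$, invokes dominated convergence and the uniform boundedness principle to force $T_\omega s_n\to h$ in norm, and then reads off $\|s_n-s_m\|_2\to 0$ from the lower bound $A\|\cdot\|_2^2\le\|T_\omega(\cdot)\|^2$; this is precisely the parenthetical alternative you sketch at the end of your second paragraph. Your primary argument instead exploits the lower bound structurally: it makes $T_\omega$ bounded below, hence $D_\omega=T_\omega^{*}$ surjective with a bounded right inverse $J$, and then a Baire-category bound on the lower semicontinuous seminorm $\Theta$ yields directly the duality estimate $\int_X|\xi||\varphi|\,d\mu\le C\|\varphi\|_2$ for all $\varphi\in L^2$, from which $\xi\in L^2$ follows by self-testing. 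Both approaches ultimately rest on Baire category (the paper's via the uniform boundedness principle, yours via the continuity of a finite lower semicontinuous seminorm), and both must confront the same interpretational question about ``exists in $\H$'' that you isolate in your last paragraph: the paper's application of the uniform boundedness principle to the extensions $\tilde G_n$ on $\H$ implicitly needs pointwise boundedness on all of $\H$, not just on $\D$, which is the exact analogue of your $\Theta(g)<\infty$ for every $g\in\H$. Your discussion of this point is more explicit than the paper's.
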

\begin{proof} The argument used in the proof of (iii)$\Rightarrow$(i) of Proposition \ref{prop2} shows that $\omega$ is a Bessel distribution map. Hence, $\int_X \varphi(x) \omega_x d\mu\in \H$, for every $\varphi \in L^2(X,\mu)$.

Without loss of generality, we assume that $\xi$ is real valued and nonnegative since every complex function is a linear combination of four nonnegative ones. Let us first suppose that $\xi$ is zero outside of a set of finite measure $Y$. Then there exists a sequence $0\leq s_i\leq s_2 \leq \cdots \leq s_n\leq \cdots \leq \xi$ of simple functions such that $s_n \to \xi$ pointwise in $X$ and $s_n(x)=0$ for $x\in X\setminus Y$.
Since $s_n(x)|\ip{\omega_x}{f}| \leq \xi(x)|\ip{\omega_x}{f}|$, by the dominated convergence theorem,
$$\int_X \xi(x)\ip{\omega_x}{f}d\mu =\lim_{n\to \infty} \int_X s_n(x)\ip{\omega_x}{f}d\mu, \quad \forall f\in\D.$$
Clearly, $s_n\in L^2(X, \mu)$. Then, for every $n \in {\mb N}$,  the conjugate linear functional $G_n(f)=\int_Xs_n(x)\ip{\omega_x}{f}d\mu$, $f\in \D$, is bounded on $\D$, and $G_n(f) \to \int_X\xi(x)\ip{\omega_x}{f}d\mu$. The uniform boundedness principle (applied to the continuous extensions of the $G_n$'s) implies that
$G(f):=\int_X\xi(x)\ip{\omega_x}{f}d\mu$ is also a bounded conjugate linear functional on $\D$. Hence, $\int_X\xi(x){\omega_x}d\mu \in \H$ and
$$\int_X\xi(x){\omega_x}d\mu=\lim_{n\to\infty} \int_Xs_n(x){\omega_x}d\mu.$$
Then, by the assumption,
$$C\|s_n-s_m\|_2 \leq \left\|\int_X (s_n(x)-s_m(x)){\omega_x}d\mu \right\| \to 0.$$
This implies that $\xi \in L^2(X,\mu)$.

If $\xi$ is a generic nonnegative function, one proceeds in similar way starting with defining, for each $n\in {\mb N}$, $\xi_n(x) = \xi(x)\chi_{{\scriptscriptstyle{X_n}}}(x)$, where $\{X_n\}$ is a sequence of sets of finite measure such that $X_n \subset X_{n+1}$, $X=\bigcup_{n\in {\mb N}}X_n$, and $\chi_{{\scriptscriptstyle{X_n}}}$ denotes the characteristic function of $X_n$.
\end{proof}
\begin{cor} \label{cor_ind}Let $\omega$ satisfy the assumptions of Lemma \ref{lemmanew}. If $\xi$ is a measurable function such that
 $\int_X \xi(x)\omega_x d\mu=0$, then $\xi=0$ $\mu$-a.e.
\end{cor}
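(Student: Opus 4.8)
The plan is to read off the statement directly from Lemma~\ref{lemmanew}. First I would unwind the meaning of the hypothesis $\int_X \xi(x)\omega_x\,d\mu = 0$: it presupposes that, for every $f\in\D$, the scalar integral $\int_X \xi(x)\ip{\omega_x}{f}\,d\mu$ is (absolutely) convergent, and that the resulting continuous conjugate linear functional on $\D$ is the zero functional. In particular the element $\int_X \xi(x)\omega_x\,d\mu$ of $\D^\times$ equals $0$, which trivially lies in $\H$; hence the integral \emph{exists in $\H$} in the precise sense required by Lemma~\ref{lemmanew}, and applying that lemma at once yields $\xi\in L^2(X,\mu)$.

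Next, with $\xi$ now known to belong to $L^2(X,\mu)$, I would plug it into the lower bound assumed on $\omega$ in Lemma~\ref{lemmanew}, namely
$$ A\|\xi\|_2^2 \leq \left\| \int_X \xi(x)\omega_x\,d\mu\right\|^2 .$$
Since the right-hand side is $\|0\|^2 = 0$ and $A>0$, this forces $\|\xi\|_2 = 0$, that is, $\xi = 0$ $\mu$-a.e.

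I do not expect any real obstacle here: the corollary is an immediate consequence of Lemma~\ref{lemmanew} together with the trivial observation that $0\in\H$, which is all that is needed to know the lemma is applicable. It may be worth noting that the statement is precisely the assertion that any $\omega$ satisfying the hypotheses of Lemma~\ref{lemmanew} is $\mu$-independent in the sense of Definition~\ref{tandg}(ii).
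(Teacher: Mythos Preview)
Your proposal is correct and matches the paper's intended argument: the corollary is stated in the paper without proof, as an immediate consequence of Lemma~\ref{lemmanew}, and your two-step reasoning (first $\xi\in L^2(X,\mu)$ by the lemma since $0\in\H$, then $\xi=0$ $\mu$-a.e.\ by the lower inequality with $A>0$) is exactly the expected justification.
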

\subsection{Parseval distribution frames}

\bedefi A weakly measurable distribution function $\omega$ is called a {\em Parseval distribution frame} if $$\int_X|\ip{f}{\omega_x}|^2d\mu = \|f\|^2, \quad f\in \D.$$
\findefi
It is clear that a Parseval distribution frame is a bounded Bessel distribution map and a frame in the sense of Definition \ref{defn_distribframe} with $S_\omega=I_\D$, the identity operator of $\D$.
\begin{lemma} \label{lemma_00}Let  $\D\subset\H\subset \D^\times$ be a rigged Hilbert space and $\omega: x\in X \to \omega_x\in \D^\times$ a weakly measurable map. The following statements are equivalent.
\begin{itemize}
\item[(i)] $\omega$ is a Parseval distribution frame;
\item[(ii)]$\ip{f}{g}= \int_X \ip{f}{\omega_x}\ip{\omega_x}{g}d\mu, \quad \forall f, g \in \D$;
\item[(iii)]$f = \int_X \ip{f}{\omega_x}\omega_x d\mu$,  the integral on the r.h.s. is understood as a continuous conjugate linear functional on $\D$, that is an element of $\D^\times$.
\end{itemize}
\end{lemma}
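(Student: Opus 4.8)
The plan is to establish the two equivalences (i)$\Leftrightarrow$(ii) and (ii)$\Leftrightarrow$(iii) separately; each of the four implications is short, so no cyclic argument is needed. The whole lemma is the distributional transcription of the standard Hilbert-space characterizations of a Parseval frame, and the proof mirrors the classical one.

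For (i)$\Rightarrow$(ii) I would argue by polarization. The Parseval identity together with the Cauchy--Schwarz inequality in $L^2(X,\mu)$ shows that the form $\Omega(f,g)=\int_X\ip{f}{\omega_x}\ip{\omega_x}{g}\,d\mu$ converges absolutely, since $|\Omega(f,g)|\le\|\ip{f}{\omega_x}\|_2\,\|\ip{g}{\omega_x}\|_2=\|f\|\,\|g\|$ for all $f,g\in\D$; hence $\Omega$ is a well-defined sesquilinear form on $\D\times\D$ with $\Omega(f,f)=\|f\|^2=\ip{f}{f}$. The polarization identity for sesquilinear forms (taken with the conjugation on the side dictated by the convention for $\ip{\cdot}{\cdot}$) then upgrades this diagonal equality to $\Omega(f,g)=\ip{f}{g}$ for all $f,g\in\D$, which is (ii). The converse (ii)$\Rightarrow$(i) is just the case $g=f$.

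For (ii)$\Leftrightarrow$(iii) the only substantive point is to interpret the integral appearing in (iii). Starting from (ii), the choice $g=f$ gives $\int_X|\ip{f}{\omega_x}|^2\,d\mu=\|f\|^2<\infty$ for every $f\in\D$, so $\omega$ satisfies condition (i) of Proposition \ref{prop2} and is a Bessel distribution map; Lemma \ref{lemma2} then guarantees that $\int_X\ip{f}{\omega_x}\omega_x\,d\mu$ defines an element $\Phi_f\in\D^\times$ with $\ip{\Phi_f}{g}=\int_X\ip{f}{\omega_x}\ip{\omega_x}{g}\,d\mu$ for every $g\in\D$. By (ii) the right-hand side equals $\ip{f}{g}=\ip{I_{\D,\D^\times}f}{g}$, so $\Phi_f=f$ in $\D^\times$, which is (iii). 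Conversely, pairing the identity in (iii) with an arbitrary $g\in\D$ and recalling that the $\D$--$\D^\times$ duality extends the inner product of $\H$ yields (ii) at once.

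I do not anticipate a genuine obstacle. The two places that deserve care are: (a) confirming the absolute convergence of $\Omega$ before invoking polarization in (i)$\Rightarrow$(ii); and (b) checking that the $\D^\times$-valued integral in (iii) is legitimately defined -- this is exactly where the Bessel property and Lemma \ref{lemma2} enter (alternatively, under (ii) the functional in question is simply $g\mapsto\ip{f}{g}$, which is $t$-continuous since the Hilbert norm is $t$-continuous, so it lies in $\D^\times$ and coincides with $f$). The rest is bookkeeping with the sesquilinearity conventions.
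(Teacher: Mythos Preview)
Your proof is correct and follows the same line as the paper's: polarization for (i)$\Leftrightarrow$(ii) and pairing with $g\in\D$ for (ii)$\Leftrightarrow$(iii), with the weak integral in (iii) interpreted exactly as the functional $g\mapsto\int_X\ip{f}{\omega_x}\ip{\omega_x}{g}\,d\mu$. The only cosmetic difference is that the paper argues cyclically (i)$\Rightarrow$(ii)$\Rightarrow$(iii)$\Rightarrow$(i) via the frame operator $S_\omega$ and the synthesis operator $T_\omega$, whereas you prove two biconditionals directly; your parenthetical alternative for (ii)$\Rightarrow$(iii) is in fact the cleaner route, since it avoids invoking Proposition~\ref{prop2} and Lemma~\ref{lemma2} (which carry a Fr\'echet hypothesis not stated in the lemma).
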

\begin{proof} (i)$\Rightarrow$ (ii):
If $\omega$ is a Parseval frame, then by \eqref{eqn_frameop} we obtain $\ip{S_\omega f}{f}=\|f\|^2$, for every $f \in \D$. Then  by the polarization identity
we easily get
$$ \ip{S_\omega f}{g}=\ip{f}{g}, \quad \forall f, g \in \D.$$

(ii)$\Rightarrow$ (iii): For every fixed $f\in\D$, we put $\xi_f(x)=\ip{f}{\omega_x}.$ Then $\xi_f\in L^2(X,\mu)$.
Therefore for every $f,g \in \D$ we have:
$$\ip{T_\omega \xi_f}{g}=\int_X \ip{f}{\omega_x}\ip{\omega_x}{g}d\mu=\ip{f}{g}.$$
Thus $$T_\omega \xi_f=f = \int_X \ip{f}{\omega_x}\omega_x d\mu.$$

(iii)$\Rightarrow$(i) In this situation by hypothesis, for every fixed $f\in\D$,  we have $T_\omega\xi_f=f=\int_X \ip{f}{\omega_x}\omega_x d\mu$
 and
$$\ip{T_\omega\xi_f}{f}=\int_X \ip{f}{\omega_x}\ip{\omega_x}{f}d\mu=\ip{f}{f}=\|{f}\|^2.$$
\end{proof}

The representation in (iii) of Lemma \ref{lemma_00} is not necessarily unique. 

\subsection{Gel'fand distribution bases}
As mentioned in the introduction, Gel'fand \cite[Ch.4, Theorem 2]{gelf3} called a $\D^\times$-valued function $\zeta$ a {\em complete system}, if it satisfies the Parseval equality and it has the property that every $f\in \D$ can be uniquely written as $f=\int_X\ip{f}{\zeta_x}\zeta_x d\mu$, in the weak sense.
As we shall see in the following discussion, these two conditions are a good substitute for the notion of an {\em orthonormal basis} which is meaningless in the present framework.

\begin{prop}\label{prop_gelfandbasis}Let  $\D\subset\H\subset \D^\times$ be a rigged Hilbert space and let $\zeta: x\in X \to \zeta_x\in \D^\times$ be a Bessel distribution map. Then the following statements are equivalent.
\begin{itemize}
\item[(a)] $\zeta$ is a $\mu$-independent Parseval distribution frame.
\item[(b)] The synthesis operator $T_\zeta$ is an isometry of $L^2(X,\mu)$ onto $\H$.
\end{itemize}
\end{prop}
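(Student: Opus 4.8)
The plan is to establish the two implications separately, exploiting the identity $S_\zeta = T_\zeta T_\zeta^\times$ (here in the bounded setting $S_\zeta = \hat S_\zeta = T_\zeta T_\zeta^*$) together with the characterizations of Parseval distribution frames in Lemma \ref{lemma_00} and the $\mu$-independence condition in Corollary \ref{cor_ind} (or directly Definition \ref{tandg}(ii)).

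For (a)$\Rightarrow$(b): since $\zeta$ is a Parseval distribution frame it is a bounded Bessel map, so $T_\zeta$ takes values in $\H$, is bounded, and its Hilbert adjoint $T_\zeta^* = D_\zeta$ extends the analysis operator $T_\zeta^\times : f \mapsto \ip{f}{\zeta_x}$. The Parseval identity $\int_X |\ip{f}{\zeta_x}|^2 d\mu = \|f\|^2$ says exactly that $\|T_\zeta^\times f\|_2 = \|f\|$ for $f \in \D$; since $\D$ is dense in $\H$ this extends to $\|D_\zeta g\|_2 = \|g\|$ for all $g \in \H$, i.e. $D_\zeta$ is an isometry into $L^2(X,\mu)$. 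Dually, $T_\zeta = D_\zeta^*$ is a coisometry, so $T_\zeta$ maps $L^2(X,\mu)$ onto $\H$ and $T_\zeta T_\zeta^* = \id_\H$. It remains to show $T_\zeta$ is itself an isometry, equivalently $T_\zeta^* T_\zeta = \id_{L^2(X,\mu)}$, equivalently $D_\zeta$ is \emph{onto}. This is where $\mu$-independence enters: $\ker T_\zeta = (\operatorname{Ran} D_\zeta)^\perp$, and $\xi \in \ker T_\zeta$ means precisely $\int_X \xi(x)\ip{\omega_x}{g}\,d\mu = \langle T_\zeta\xi, g\rangle = 0$ for all $g \in \D$ (using that $T_\zeta\xi \in \H$ and $\D$ is dense), which by Definition \ref{tandg}(ii) forces $\xi = 0$ $\mu$-a.e. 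Hence $T_\zeta$ is injective with $T_\zeta T_\zeta^* = \id_\H$, so it is a surjective isometry of $L^2(X,\mu)$ onto $\H$.

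For (b)$\Rightarrow$(a): if $T_\zeta$ is an isometry of $L^2(X,\mu)$ onto $\H$, then $T_\zeta^* T_\zeta = \id_{L^2(X,\mu)}$ and $T_\zeta T_\zeta^* = \id_\H$. The first equality gives injectivity of $T_\zeta$, hence $\mu$-independence by the same reformulation of $\ker T_\zeta$ as above. The second equality gives $S_\zeta = T_\zeta T_\zeta^* = \id$ on $\H$, in particular $\ip{S_\zeta f}{g} = \ip{f}{g}$ for $f,g\in\D$, which by Lemma \ref{lemma_00}(ii)$\Leftrightarrow$(i) means $\zeta$ is a Parseval distribution frame. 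Combining, (a) holds.

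The main obstacle is the bookkeeping around domains: one must be careful that $T_\zeta$ genuinely lands in $\H$ (guaranteed since a Parseval frame is a bounded Bessel map, resp. since in (b) the hypothesis already places the range in $\H$), that the Parseval identity on the dense subspace $\D$ propagates to the Hilbert-space adjoint $D_\zeta = T_\zeta^*$ on all of $\H$ by continuity, and that the weak vanishing condition defining $\mu$-independence is tested only against $g \in \D$ — which is exactly the form in which $\ker T_\zeta$ presents itself once one knows $T_\zeta\xi\in\H$. Once these identifications are in place, everything reduces to the elementary fact that a bounded operator between Hilbert spaces is a surjective isometry if and only if both $T^*T$ and $TT^*$ are identities.
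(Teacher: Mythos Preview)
Your argument is correct and proceeds along the same lines as the paper: Parseval makes $T_\zeta^*$ an isometry (equivalently $T_\zeta T_\zeta^*=I_\H$), $\mu$-independence makes $T_\zeta$ injective, and together these force $T_\zeta$ to be unitary; the paper's version is just more compressed, obtaining surjectivity in (a)$\Rightarrow$(b) via the extension $\check\zeta_x$ rather than the coisometry identity.

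One small point to tighten in (b)$\Rightarrow$(a): you deduce $\mu$-independence from $\ker T_\zeta=\{0\}$, but $T_\zeta$ is defined only on $L^2(X,\mu)$, whereas Definition~\ref{tandg}(ii) quantifies over arbitrary measurable $\xi$ for which the integrals make sense. The paper handles this by invoking Corollary~\ref{cor_ind} (built on Lemma~\ref{lemmanew}): since $T_\zeta$ is an isometry the hypothesis of Lemma~\ref{lemmanew} holds with $A=B=1$, so any measurable $\xi$ with $\int_X\xi(x)\zeta_x\,d\mu=0\in\H$ is automatically in $L^2(X,\mu)$, after which your injectivity argument applies. You announced Corollary~\ref{cor_ind} in your plan; just cite it at this step rather than reusing the $L^2$-only reformulation of $\ker T_\zeta$.
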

\begin{proof}
 (a)$\Rightarrow$(b): Since $\zeta$ is $\mu$-independent, $T_\zeta$ is injective. If $f\in \H$ we put $\xi_f(x)=\ip{f}{\check{\zeta}_x}$. Then,
$$T_\zeta \xi_f=f = \int_X \ip{f}{\check{\zeta}_x}\zeta_x d\mu.$$
Hence, $T_\zeta$ is also onto. The isometry property follows immediately from the Parseval identity.

(b)$\Rightarrow$(a): This is an immediate consequence of $T_\zeta$ being an isometry and of Corollary \ref{cor_ind}.
\end{proof}

We will call a weakly measurable function $\zeta$ {\em Gel'fand distribution basis} if it satisfies one of the equivalent conditions of Proposition \ref{prop_gelfandbasis}.
{\begin{cor} \label{cor_gelfandbasis}Let $\zeta$ be a Gel'fand distribution basis. The following statements hold.
\begin{itemize}
\item[(i)] For every $f\in \H$ there exists a unique function $\xi_f\in L^2(X, \mu)$ such that
$$ f=\int_X \xi_f(x)\zeta_x d\mu.$$ In particular, if $f\in \D$, then $\xi_f(x)=\ip{f}{\zeta_x}$ $\mu$-a.e.
\item[(ii)]For every fixed $x\in X$, the map $f\in \H\to \xi_f(x)\in {\mb C}$ defines as in \eqref{eqn_check} a linear functional $\check{\zeta_x}$ on $\H$ and
$$ f=\int_X \ip{f}{\check{\zeta_x}}{\zeta_x} d\mu, \quad \forall f\in \H.$$
\end{itemize}
 \end{cor}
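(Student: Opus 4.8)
The plan is to extract everything from Proposition \ref{prop_gelfandbasis}: for a Gel'fand distribution basis $\zeta$ the synthesis operator $T_\zeta$ is an isometry of $L^2(X,\mu)$ \emph{onto} $\H$. A linear isometry of one Hilbert space onto another is unitary, so $T_\zeta$ is bijective and $T_\zeta^{-1}=T_\zeta^{*}$; moreover, since $\zeta$ is Parseval it is in particular a bounded Bessel distribution map, so all the conclusions of the subsection on bounded Bessel maps apply to it (the range of $T_\zeta$ lies in $\H$, $T_\zeta^{*}$ extends the analysis operator, etc.).

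For part (i) I would put $\xi_f:=T_\zeta^{-1}f\in L^2(X,\mu)$ for each $f\in\H$. By the very definition of $T_\zeta$ this says $f=T_\zeta\xi_f=\int_X\xi_f(x)\zeta_x\,d\mu$, the right-hand side being a priori an element of $\D^\times$ but in fact lying in $\H$ because $\zeta$ is a bounded Bessel map. Uniqueness of $\xi_f$ is exactly the injectivity of $T_\zeta$. When $f\in\D$, the Bessel property gives $x\mapsto\ip{f}{\zeta_x}\in L^2(X,\mu)$, while Lemma \ref{lemma_00} (applied to the Parseval distribution frame $\zeta$) gives $f=\int_X\ip{f}{\zeta_x}\zeta_x\,d\mu=T_\zeta\big(\ip{f}{\zeta_\cdot}\big)$; comparing with $f=T_\zeta\xi_f$ and using injectivity of $T_\zeta$ yields $\xi_f(x)=\ip{f}{\zeta_x}$ $\mu$-a.e.

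For part (ii) I would invoke the explicit description of $T_\zeta^{*}$ recalled in the bounded-Bessel subsection: for $g\in\H$, $T_\zeta^{*}g$ is the function $x\mapsto\ip{g}{\check{\zeta}_x}$, where $\check{\zeta}_x$ is the (possibly discontinuous) linear functional on $\H$ defined in \eqref{eqn_check} by $\ip{g}{\check{\zeta}_x}=\lim_n\ip{g_n}{\zeta_x}$ for any $\D$-sequence $g_n\to g$. Since $T_\zeta^{-1}=T_\zeta^{*}$ we obtain $\xi_f=T_\zeta^{*}f$, i.e. $\xi_f(x)=\ip{f}{\check{\zeta}_x}$ for $\mu$-a.e. $x$; in particular, for each fixed $x$ the map $f\mapsto\xi_f(x)$ is represented by the genuine linear functional $\check{\zeta}_x$ of \eqref{eqn_check}. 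Substituting this identification into the representation of part (i) gives $f=\int_X\ip{f}{\check{\zeta}_x}\zeta_x\,d\mu$ for every $f\in\H$.

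The only delicate point — and hence the main (minor) obstacle — is the bookkeeping between pointwise-defined objects and $L^2$-classes in part (ii): $\xi_f$ is merely an element of $L^2(X,\mu)$, so the identity $\xi_f(x)=\ip{f}{\check{\zeta}_x}$ holds only almost everywhere, with an exceptional null set that may depend on $f$, whereas $\check{\zeta}_x$ is well defined for each fixed $x$; the statement should therefore be read as ``$\check{\zeta}_x$ is the functional of \eqref{eqn_check}, and $\xi_f$ is represented by it $\mu$-a.e.'' Everything else is a direct transcription of Proposition \ref{prop_gelfandbasis}, Lemma \ref{lemma_00}, and the bounded-Bessel discussion, with no further computation required.
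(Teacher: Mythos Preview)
Your proposal is correct and matches the paper's intent: the corollary is stated without proof precisely because it follows directly from Proposition \ref{prop_gelfandbasis} (b) together with the description of $T_\zeta^*$ via $\check{\zeta}_x$ in the bounded-Bessel subsection, exactly as you outline. Your observation about the $\mu$-a.e.\ bookkeeping in part (ii) is a fair caveat, but the paper itself reads the statement in the same relaxed sense.
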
  }
\beex \label{ex_3.15}The family of functions $\zeta_x(y)= \frac{1}{\sqrt{2\pi}}e^{-ixy}$, $x\in \RN$, considered in the introduction, is a Gel'fand  distribution basis. This can be deduced by applying standard results on the Fourier transform. In particular, denoting as usual by $\hat{g}$, $\check{g}$, respectively, the Fourier transform and the inverse Fourier transform of $g\in L^2(\RN)$, we have
\begin{align*} (T_\zeta \xi)(y) &= \frac{1}{\sqrt{2\pi}}\int_\RN\xi(x)e^{-ixy}dx=\hat{\xi}(y), \quad \forall \xi \in L^2(\RN);\\
T_\zeta^*f &=\check{f}, \quad \forall f \in L^2(\RN).
\end{align*}
\enex

\beex \label{ex_3.16}Let us consider the function $\delta: x\in \RN\to \delta_x\in \SSS$, where $\delta_x$ stands for the $\delta$ distribution centered at $x$. As it is known, $\delta$ acts in the following way
$\ip{\delta_x}{\phi}=\overline{\phi(x)} $, for every $\phi\in \SC$.
Then one trivially has
$$\int_\RN |\ip{\delta_x}{\phi}|^2dx= \int_\RN|\phi(x)|^2 dx =\|{\phi}\|^2_2,\quad \forall \phi\in \SC ;$$
hence, $\delta$ is a Parseval frame and it is $\mu$-independent.
If $\xi \in L^2(\RN)$, we have, for every $\phi\in \SC$,
$$\ip{T_\delta \xi}{\phi} =\int_{\RN}\xi(x)\ip{\delta_x}{\phi}dx=\int_{\RN}\xi(x)\overline{\phi(x)}dx =\ip{\xi}{\phi}.$$
Hence $T_\delta\xi=\xi$, for every $\xi \in L^2(\RN)$ and, clearly, $T_\delta^* f=f$, for every $f\in \H=L^2(\RN)$.
\enex
\medskip

Proposition \ref{prop_gelfandbasis} and Corollary \ref{cor_gelfandbasis} suggest a more general class of bases that will play the same role as Riesz bases in the ordinary Hilbert space framework.
\begin{prop}\label{prop_rieszbasis}Let  $\D\subset\H\subset \D^\times$ be a rigged Hilbert space and let $\omega: x\in X \to \omega_x\in \D^\times$ be a Bessel distribution map. Then the following statements are equivalent.
\begin{itemize}
\item[(a)]  $\omega$ is a $\mu$-independent distribution frame.
\item[(b)] If $\zeta$ is a Gel'fand distribution basis, then the operator $W$ defined, for $f\in \H$, by
$$ f=\int_X \xi_f(x)\zeta_x d\mu \to W f= \int_X \xi_f(x)\omega_x d\mu$$ is continuous and has a bounded inverse.
\item[(c)] The synthesis operator $T_\omega$ is a topological isomorphism of $L^2(X, \mu)$ onto $\H$.
\item[(d)] { $\omega$ is total and there exist $A,B>0$ such that
\begin{equation}\label{eqn_ref} A\|\xi\|_2^2 \leq \left\|\int_X \xi(x)\omega_x d\mu \right\|^2 \leq B\|\xi\|_2^2, \quad \forall \xi \in L^2(X, \mu).\end{equation}}
\end{itemize}
\end{prop}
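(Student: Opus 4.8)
The plan is to reformulate each of (a)--(d) as a property of the bounded operator $T_\omega\colon L^2(X,\mu)\to\H$ and then pass between those properties by elementary Hilbert‑space arguments. This is legitimate because in each of the four cases $\omega$ turns out to be a \emph{bounded} Bessel distribution map, so that $T_\omega$ really takes values in $\H$ and is bounded: for (a) this is Definition~\ref{defn_distribframe} and the discussion following it, for (d) it is contained in Lemma~\ref{lemmanew} (whose proof shows that $\omega$ is Bessel and that $\int_X\xi(x)\omega_x\,d\mu\in\H$ for every $\xi\in L^2(X,\mu)$), and for (b) and (c) it is automatic. Throughout I would use the facts established in Section~\ref{sect_3}: that $T_\omega^\times f=(\ip{f}{\omega_x})_x$ for $f\in\D$, that the Hilbert‑space adjoint $T_\omega^*$ extends $T_\omega^\times$, that $\ip{T_\omega\xi}{g}=\int_X\xi(x)\ip{\omega_x}{g}\,d\mu$, and that $\hat S_\omega=T_\omega T_\omega^*$. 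I would then prove (a)$\Leftrightarrow$(c), (b)$\Leftrightarrow$(c) and (c)$\Leftrightarrow$(d).

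For (a)$\Leftrightarrow$(c): if $\omega$ is a distribution frame, then $\|T_\omega^* f\|_2^2=\int_X|\ip{f}{\omega_x}|^2d\mu$ for $f\in\D$, so \eqref{eqn_frame_main1} reads $A\|f\|^2\le\|T_\omega^* f\|_2^2\le B\|f\|^2$ on the dense subspace $\D$; by continuity of $T_\omega^*$ and of the norm this persists on all of $\H$, hence $T_\omega^*$ is bounded below and therefore $T_\omega$ is surjective; together with the injectivity of $T_\omega$ coming from $\mu$‑independence, the open mapping theorem gives that $T_\omega$ is a topological isomorphism. Conversely, if $T_\omega$ is a topological isomorphism, so is $T_\omega^*$, and restricting its two‑sided estimate to $\D$ gives back \eqref{eqn_frame_main1}, while injectivity of $T_\omega$ is $\mu$‑independence (for the passage from general measurable functions to $L^2$ one invokes Corollary~\ref{cor_ind}). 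For (b)$\Leftrightarrow$(c): fixing a Gel'fand distribution basis $\zeta$ (one exists whenever $\H$ is separable, e.g.\ by orthonormalizing a countable dense subset of $\D$), Proposition~\ref{prop_gelfandbasis} gives that $T_\zeta$ is an isometry of $L^2(X,\mu)$ onto $\H$, Corollary~\ref{cor_gelfandbasis} identifies the function $\xi_f$ in (b) with $T_\zeta^* f$, and hence the operator $W$ of (b) equals $T_\omega T_\zeta^*$; so $W$ is continuous with bounded inverse precisely when $T_\omega$ is a topological isomorphism (the inverse being $T_\zeta T_\omega^{-1}$). Finally (c)$\Rightarrow$(d) is immediate from $T_\omega\xi=\int_X\xi(x)\omega_x\,d\mu$, together with the remark that a distribution frame is automatically total because $\int_X|\ip{f}{\omega_x}|^2d\mu=0$ forces $A\|f\|^2\le0$.

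The real content, and the step I expect to be the main obstacle, is (d)$\Rightarrow$(c). The estimate \eqref{eqn_ref} says exactly that $T_\omega\colon L^2(X,\mu)\to\H$ is bounded and bounded below, hence injective (so $\mu$‑independence follows, cf.\ Corollary~\ref{cor_ind}) with closed range; what remains is to show ${\sf Ran}\,T_\omega=\H$, i.e.\ $\ker T_\omega^*=\{0\}$. Since $T_\omega$ is bounded below, $T_\omega^*$ restricts to an isomorphism of ${\sf Ran}\,T_\omega$ onto $L^2(X,\mu)$, so $\hat S_\omega=T_\omega T_\omega^*$ is an isomorphism of ${\sf Ran}\,T_\omega$ onto itself and vanishes on $({\sf Ran}\,T_\omega)^\perp=\ker T_\omega^*$. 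Given $h\in\ker T_\omega^*$ one then has $\hat S_\omega h=0$ and $T_\omega^* h=0$; the plan would be to approximate $h$ by a sequence $h_n\in\D$, note that $T_\omega^* h_n=(\ip{h_n}{\omega_x})_x\to0$ in $L^2(X,\mu)$, and use totality to force $h=0$. The delicate point is precisely this passage to $\D$: totality is a hypothesis about vectors of $\D$, whereas $\ker T_\omega^*$ a priori sits only in $\H$, and a naive appeal to the definition will not bridge the gap; one must instead establish something like the inclusion $\D\subseteq{\sf Ran}\,T_\omega$, and the whole equivalence hinges on this step.
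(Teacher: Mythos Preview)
Your pairwise organization (a)$\Leftrightarrow$(c), (b)$\Leftrightarrow$(c), (c)$\Leftrightarrow$(d) differs from the paper's cycle (a)$\Rightarrow$(b)$\Rightarrow$(c)$\Rightarrow$(d)$\Rightarrow$(a), but the content largely coincides. Your density argument for (a)$\Rightarrow$(c) (extending the frame inequality from $\D$ to $\H$ so that $T_\omega^*$ is bounded below, hence $T_\omega$ surjective) is precisely what underlies the paper's unjustified assertion in (a)$\Rightarrow$(b) that ``by (a) there exists a unique $\xi_g\in L^2$ with $g=\int_X\xi_g(x)\omega_x\,d\mu$''; and your factorization $W=T_\omega T_\zeta^*$ is a tidier form of the paper's (b)$\Rightarrow$(c). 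One small slip: your parenthetical construction of a Gel'fand basis via an orthonormal basis contained in $\D$ yields a basis indexed by $\mathbb N$, whereas (b) requires $\zeta$ to live on the \emph{same} $(X,\mu)$ as $\omega$; the paper's (b)$\Rightarrow$(c) tacitly makes the same existence assumption.

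The substantive divergence is exactly at the step you flag. The paper dispatches (d)$\Rightarrow$(a) in one line: ``$T_\omega$ is injective and has closed range, then ${\sf Ran}\,T_\omega=({\sf Ker}\,T_\omega^*)^\perp=\H$, since $\omega$ is total''. This is the very leap you decline to make, and your hesitation is warranted: totality (Definition~\ref{tandg}) says only ${\sf Ker}\,T_\omega^*\cap\D=\{0\}$, not ${\sf Ker}\,T_\omega^*=\{0\}$ in $\H$. In fact the implication appears to fail as stated. Take $\D=\mathcal S(\mathbb R)$, $\H=L^2(\mathbb R)$, $h=\chi_{[0,1]}\in\H\setminus\D$, $(X,\mu)=(\mathbb N,\mbox{counting})$, and let $(\omega_n)$ be an orthonormal basis of $\{h\}^\perp\subset\H$. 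Then $T_\omega$ is an isometry of $\ell^2$ onto $\{h\}^\perp$, so \eqref{eqn_ref} holds with $A=B=1$, and totality holds because $\D\cap\mathrm{span}\{h\}=\{0\}$; yet $T_\omega$ is not onto $\H$, and $\sum_n|\ip{f}{\omega_n}|^2=\|f\|^2-|\ip{f}{h}|^2$ admits no positive lower frame bound on $\D$. So neither your proposed route through $\D\subseteq{\sf Ran}\,T_\omega$ nor the paper's one-line appeal to totality can close the gap; some strengthening of (d) (e.g.\ totality of $\check\omega$ on all of $\H$) seems to be needed.
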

\begin{proof} {(a)$\Rightarrow$(b): $W$ is clearly a well-defined linear operator. W is injective. Indeed, if $Wf=0$, by (a), $\xi_f=0$ $\mu$-a.e. This in turn implies that $f=0$.
Let now $g\in \H$. By (a) it follows that there exists a unique function $\xi_g \in L^2(X, \mu)$ such that $g=\int_X \xi_g(x)\omega_x d\mu$. Define $f=\int_X \xi_g(x)\zeta_x d\mu$. Then $Wf=g$. Hence, $W$ is surjective. Thus, $W$ is everywhere defined in $\H$ and has an everywhere defined inverse. We prove that $W$ is closed.
Let $\{f_n\}$ be a sequence in $\H$ such that $\|f_n\|\to 0$ and $Wf_n \to h\in \H$. Let $f_n=\int_X \xi_{f_n}(x)\zeta_x d\mu$, $\xi_{f_n} \in L^2(X, \mu)$. Since $\|f_n\|=\|\xi_{f_n}\|_2 \to 0$, we have, for every $g\in \D$,
$$|\ip{Wf_n}{g}|=\left|\int_X  \xi_{f_n} \ip{\omega_x}{g}d\mu\right|\leq \|\xi_{f_n}\|_2 \|\ip{\omega_x}{g}\|_2\to 0.
$$
Hence $\ip{h}{g}=0$, for every $g\in \D$ and, therefore, $h=0$.
By the closed graph theorem we conclude that $W$ is bounded.
 The fact that $W^{-1}$ is also bounded follows from the inverse mapping theorem.}

(b)$\Rightarrow$(c): We begin with proving that $T_\omega$ is an isomorphism. Suppose that $T_\omega \xi=0$ with $\xi \in L^2(X, \mu)$. Then,
$$W^{-1}T_\omega\xi = \int_X \xi(x) \zeta_x d\mu=0.$$
The $\mu$-independence of $\zeta$ implies that $\xi(x)=0$ $\mu$-a.e.

Let now $g$ be an arbitrary element of $\H$ and $f:=W^{-1}g$. Then, by Corollary \ref{cor_gelfandbasis}, there exists a unique $\xi_f \in L^2(X, \mu)$ such that $f=\int_X \xi_f (x) \zeta_x d\mu$. Then $Wf=\int_X \xi_f (x) \omega_x d\mu$. Hence $T_\omega \xi_f=g$. Thus $T_\omega$ is surjective.
It remains to prove that $T_\omega$ is bounded and has bounded inverse. Let $\xi\in L^2(X, \mu)$ and define $f:=T_\omega \xi$. Then, by the definition itself, it follows that $\|W^{-1}f\|=\|\xi\|_2$. Thus,
$$ \|T_\omega \xi\|=\|f\| =\|WW^{-1}f\|\leq \|W\|\|W^{-1}f\|= \|W\|\|\xi\|_2.$$
The fact that $T_\omega^{-1}$ is also bounded follows again from the inverse mapping theorem.

{(c)$\Rightarrow$(d): Since $T_\omega$ is bounded with bounded inverse, there exist $A,B>0$ such that \eqref{eqn_ref} holds. Moreover the analysis operator $T_\omega^*$ is also bounded with bounded inverse. Its injectivity then implies that $\omega$ is total.

(d)$\Rightarrow$(a): By \eqref{eqn_ref} it follows that $T_\omega$ is injective and has closed range. Then ${\sf Ran}T_\omega = ({\sf Ker} T_\omega^*)^\perp =\H$, since $\omega$ is total. Hence $T_\omega$ is bounded with bounded inverse. The same is then true for $T_\omega^*$, the analysis operator. Hence, there exist $A', B'>0$ such that
$$ A'\|f\|^2 \leq \|T_\omega^* f\|_2^2 \leq B'\|f\|^2,\quad \forall f\in \H.$$
If, in particular, $f\in \D$, then $T_\omega^*f=\xi_f$ with $\xi_f(x)= \ip{f}{\omega_x}$. Thus,
$$ A'\|f\|^2 \leq \int_X |\ip{f}{\omega_x} |^2 d\mu \leq B'\|f\|^2, \quad \forall f\in \D.$$
Hence $\omega$ is a distribution frame. The $\mu$-independence of $\omega$ follows immediately from \eqref{eqn_ref} and Corollary \ref{cor_ind}.}
\end{proof}

We will call {\em Riesz distribution basis} a weakly measurable function $\omega$ satisfying one of the equivalent conditions of Proposition \ref{prop_rieszbasis}.

\begin{prop} If $\omega$ is a Riesz distribution basis then $\omega$ possesses a unique dual frame $\theta$ which is also a Riesz distribution basis.
\end{prop}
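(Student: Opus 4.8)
The plan is to rely on Proposition \ref{prop_dual2} and the construction of the canonical dual frame following it, together with the synthesis-operator characterisation of Riesz distribution bases in Proposition \ref{prop_rieszbasis}. Since a Riesz distribution basis $\omega$ is in particular a distribution frame, Lemma \ref{lemma_38} yields $R_\omega\in{\mc L}(\D)$ and, by Proposition \ref{prop_dual2}, $\theta_x:=R_\omega^\times\omega_x$ is a distribution frame with $\ip{f}{g}=\int_X\ip{f}{\theta_x}\ip{\omega_x}{g}\,d\mu$ for all $f,g\in\D$; existence of a dual frame is thus already in hand, and what remains is to show (i) $\theta$ is a Riesz distribution basis and (ii) it is the only weakly measurable Bessel distribution map $\theta'$ satisfying $\ip{f}{g}=\int_X\ip{f}{\theta'_x}\ip{\omega_x}{g}\,d\mu$ for all $f,g\in\D$.

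The common mechanism for both points is to read such a reconstruction identity through the operators $T_\omega$ and $T_{\theta'}^\times$. For $f\in\D$ the function $x\mapsto\ip{f}{\theta'_x}$ is $T_{\theta'}^\times f\in L^2(X,\mu)$, so the identity says $\ip{T_\omega(T_{\theta'}^\times f)}{g}=\ip{f}{g}$ for every $g\in\D$. As $\omega$ is a bounded Bessel map, $T_\omega$ maps $L^2(X,\mu)$ into $\H$, hence $T_\omega(T_{\theta'}^\times f)$ and $f$ both lie in $\H$; since $\D$ is dense in $\H$ we conclude $T_\omega(T_{\theta'}^\times f)=f$ for every $f\in\D$. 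Now, $\omega$ being a Riesz distribution basis, Proposition \ref{prop_rieszbasis}(c) tells us $T_\omega$ is a topological isomorphism of $L^2(X,\mu)$ onto $\H$; in particular it is injective, so $T_{\theta'}^\times f=T_\omega^{-1}f$ for all $f\in\D$. The right-hand side is independent of $\theta'$, which gives $\ip{f}{\theta'_x}=\ip{f}{\theta_x}$ for $\mu$-a.e.\ $x$ and every $f\in\D$ --- the asserted uniqueness in the natural weak sense.

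Applying the same computation to the canonical dual $\theta$ gives $T_\omega(T_\theta^\times f)=f$ for $f\in\D$; since $\theta$ is a distribution frame it is a bounded Bessel map, so $T_\theta^\times$ extends to the bounded operator $D_\theta=T_\theta^*:\H\to L^2(X,\mu)$, and by density and continuity $T_\omega D_\theta=I_\H$. Because $T_\omega$ is a topological isomorphism, $D_\theta=T_\omega^{-1}$ is one as well, and therefore so is its Hilbert-space adjoint $T_\theta=D_\theta^*=(T_\omega^*)^{-1}:L^2(X,\mu)\to\H$. By the equivalence (c)$\Leftrightarrow$(a) of Proposition \ref{prop_rieszbasis}, $\theta$ is a Riesz distribution basis, completing the proof.

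The step I expect to require the most care is the transfer of identities that hold on $\D$ (valued, a priori, in $\D^\times$) to identities of bounded operators on $\H$: one must invoke that, for a bounded Bessel distribution map, the synthesis operator lands in $\H$ and its Hilbert adjoint extends the analysis operator (the content of the subsection on bounded Bessel distribution maps), and keep track of which operator acts between which pair of spaces. A minor point worth stating explicitly is the meaning of ``unique'': two weakly measurable $\D^\times$-valued maps are regarded as equal when they agree weakly against every $f\in\D$ for $\mu$-almost all $x$, which is precisely what the argument produces.
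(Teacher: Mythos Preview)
Your proof is correct, but it follows a different route from the paper's. For uniqueness, the paper simply invokes the $\mu$-independence of $\omega$ directly: if $\theta$ and $\theta'$ are two duals then $\int_X(\ip{f}{\theta_x}-\ip{f}{\theta'_x})\ip{\omega_x}{g}\,d\mu=0$ for all $g\in\D$, and $\mu$-independence forces $\ip{f}{\theta_x}=\ip{f}{\theta'_x}$ a.e.\ --- no need to pass through $T_\omega^{-1}$ or to assume $\theta'$ Bessel. For the Riesz property of $\theta$, the paper uses characterisation (b) of Proposition~\ref{prop_rieszbasis}: it introduces a Gel'fand basis $\zeta$ and the operator $W$ of that item, computes that $W^*h=\int_X\ip{h}{\omega_x}\zeta_x\,d\mu$, defines the analogous operator $V$ for $\theta$, and checks $VW^*=I$ and $V^*W=I$ on $\D$, concluding $V=(W^*)^{-1}$ so that (b) applies to $\theta$. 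You instead use characterisation (c): from $T_\omega T_\theta^\times f=f$ on $\D$ you extend to $T_\omega D_\theta=I_\H$, read off $T_\theta=(T_\omega^*)^{-1}$, and conclude $T_\theta$ is a topological isomorphism. Your argument is more self-contained in that it avoids the auxiliary Gel'fand basis and works purely with synthesis/analysis operators; the paper's argument has the advantage of making the relationship between $\omega$, $\theta$ and a fixed Gel'fand basis explicit via $V=(W^*)^{-1}$, which dovetails with the discussion of ``transforming Gel'fand bases'' that follows. Both are equally valid.
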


\begin{proof} By Proposition \ref{prop_dual2}, there exists a canonical dual distribution frame $\theta$ and by the $\mu$-independence of $\omega$ the uniqueness of $\theta$
 follows. Let $W$ be the the operator in (b) of Proposition \ref{prop_rieszbasis}. The operator $W^*$ acts on $h\in \D$ as $W^*h=\int_X\ip{h}{\omega_x}\zeta_xd\mu$. Moreover, $W^*$ is invertible and has bounded inverse.
 Let $V$ be the following operator
 $$ f=\int_X \xi_f(x)\zeta_x d\mu \to V f= \int_X \xi_f(x)\theta_x d\mu.$$
 Then, for every $h\in\D$,
 $$VW^*h =V\int_X \ip{h}{\omega_x}\zeta_x d\mu = \int_X\ip{h}{\omega_x}\theta_x d\mu =h$$
 and, on the other hand, for every $g\in\D$
 $$ V^*Wg= \int_X \ip{g}{\zeta_x}\zeta_x d\mu=g.$$
 This proves that $V=(W^*)^{-1}$. Thus by (b) of Proposition \ref{prop_rieszbasis} it follows that $\theta$ is also a Riesz distribution basis.
 \end{proof}

\subsection{Transforming Gel'fand bases}
Proposition \ref{prop_rieszbasis} (b) characterizes a Riesz distribution basis in terms of the bounded operator $W$ acting on the Hilbert space $\H$. Let $W^*$ be its Hilbert adjoint
and suppose that $W^*\upharpoonright_\D$ maps $\D[t]$ into itself continuously. Then $W$ has an extension $\hat{W}$ to $\D^\times$ which is weakly continuous and $\hat{W}^\times \upharpoonright_\D= W^*\upharpoonright_\D$. In this case, we obtain
$$ \ip{f}{\hat{W}^\times g}= \int_X \ip{f}{\zeta_x}\ip{\zeta_x}{\hat{W}^\times g}d\mu = \int_X \ip{f}{\zeta_x}\ip{\hat{W}\zeta_x}{ g}d\mu. $$
On the other hand,
$$ \ip{Wf}{g}= \int_X \ip{f}{\zeta_x}\ip{\omega_x}{ g}d\mu. $$
Using the $\mu$-independence of $\zeta$ we get $\hat{W}\zeta_x =\omega_x$ $\mu$-a.e. Hence, in this case, $\omega$ is obtained by transforming $\zeta$ by means of the weakly continuous operator $\hat{W}$.
For this reason it is of some interest to consider distribution functions $\omega$ that are the image of $\zeta$ through an operator $M$ which maps $\D^\times$ into itself.

\medskip
Let $\zeta:x\in X \to \zeta_x\in \D^\times$ be a Gel'fand distribution basis and $M:\D^\times \to \D^\times$ a linear operator. Put $\omega_x = M\zeta_x$, $x\in X$.
If $M$ is weakly continuous then $\omega$ is weakly measurable and
$$ \int_X|\ip{f}{\omega_x}|^2d\mu = \int_X|\ip{f}{M\zeta_x}|^2d\mu=\int_X|\ip{M^\times f}{\zeta_x}|^2d\mu =\|M^\times f\|^2, \quad \forall f\in \D.$$
This shows that $\omega$ is a bounded Bessel map if and only if $M^\times$ is a bounded operator. This is not always the case as shown in Example \ref{ex_33}: the distribution function $\omega$ considered there is in fact the image of $\zeta_x= \frac{1}{\sqrt{2\pi}}e^{ixy}$ through the unbounded operator $M:\phi\in \SC \to M\phi\in \SC$ with $(M\phi)(y) = (1+y^2)\phi(y)$, $y\in \RN$.

Transforming an orthonormal basis by means of an operator $M$ can produce very different results.
\begin{prop} \label{prop_320} Let $\zeta:x\in X \to \zeta_x\in \D^\times$ be a Gel'fand distribution basis and $M:\D^\times \to \D^\times$ a weakly continuous map. Define a weakly measurable function $\omega$ by $\omega_x=M\zeta_x$, $x\in X$. Then $\omega$ is a Bessel distribution map and the following statements hold.
\begin{itemize}
\item[(i)] $\omega$ is a bounded Bessel distribution map if and only if $M^\times$ is bounded (with respect to the Hilbert norm).
\item[(ii)]$\omega$ is a distribution frame if and only if $M^\times$ is continuous from $\D[\|\cdot\|]$ into $\D[\|\cdot\|]$ and it has an inverse, continuous from ${\sf Ran}\, M^\times[\|\cdot\|]$ into $\D[\|\cdot\|]$.
\item[(iii)]$\omega$ is a Parseval frame if and only if $M^\times$ is isometric.
\item[(iv)] If $M$ has a continuous inverse, everywhere defined in $\D^\times$, then $\omega$ is a distribution basis.
\item[(v)]If $M^\times$ is bounded and has a bounded inverse everywhere defined on $\D$, then $\omega$ is a Riesz distribution basis.
\item[(vi)]If $M^\times$ is isometric and $M^\times \D=\D$, then $\omega$ is a Gel'fand distribution basis.

\end{itemize}
\end{prop}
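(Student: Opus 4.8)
The plan is to reduce the whole proposition to the identity
$$\int_X|\ip{f}{\omega_x}|^2\,d\mu=\|M^\times f\|^2,\qquad f\in\D,$$
established just above, together with two standard facts about the given Gel'fand basis: it is a $\mu$-independent Parseval distribution frame (Proposition \ref{prop_gelfandbasis}), and it satisfies $\ip{f}{g}=\int_X\ip{f}{\zeta_x}\ip{\zeta_x}{g}\,d\mu$ for all $f,g\in\D$ (Lemma \ref{lemma_00}).

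Parts (i), (ii) and (iii) read off directly from that identity. Since the Hilbert norm is $t$-continuous, $\|M^\times f\|\le p(f)$ for some continuous seminorm $p$, so $\omega$ is a Bessel distribution map; it is \emph{bounded} exactly when $\|M^\times f\|^2\le B\|f\|^2$, i.e. when $M^\times$ is bounded on the dense subspace $\D$, equivalently extends to a bounded operator on $\H$ (this is (i)); it is a \emph{distribution frame} exactly when moreover $A\|f\|^2\le\|M^\times f\|^2$, i.e. $M^\times$ is bounded from $\D[\|\cdot\|]$ into $\D[\|\cdot\|]$ and is injective with an inverse bounded on ${\sf Ran}\,M^\times$ (this is (ii)); and it is a \emph{Parseval frame} exactly when $\|M^\times f\|=\|f\|$, i.e. $M^\times$ is isometric (this is (iii)).

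For (v) and (vi) I would add one $\mu$-independence observation. If $M^\times$ maps $\D$ \emph{onto} $\D$ — which holds in (v) because $(M^\times)^{-1}$ is everywhere defined on $\D$, and in (vi) by hypothesis — then $\omega$ is $\mu$-independent: from $\int_X\xi(x)\ip{g}{\omega_x}\,d\mu=0$ for all $g\in\D$ and the relation $\ip{g}{\omega_x}=\ip{M^\times g}{\zeta_x}$, letting $h=M^\times g$ run over all of $\D$ gives $\int_X\xi(x)\ip{h}{\zeta_x}\,d\mu=0$ for all $h\in\D$, whence $\xi=0$ $\mu$-a.e. by $\mu$-independence of $\zeta$. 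Then (v) follows from (ii) and Proposition \ref{prop_rieszbasis}(a) (a $\mu$-independent distribution frame is a Riesz distribution basis), and (vi) follows from (iii) and Proposition \ref{prop_gelfandbasis} (a $\mu$-independent Parseval frame is a Gel'fand distribution basis).

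Part (iv) is the delicate one. Since $M,M^{-1}\in{\mc L}(\D^\times)$, passing to adjoints gives $M^\times,(M^{-1})^\times\in{\mc L}(\D)$ with $(M^{-1})^\times M^\times=M^\times(M^{-1})^\times=I_\D$; in particular $M^\times\D=\D$, so the computation above again shows $\omega$ is $\mu$-independent, which secures the uniqueness half of Definition \ref{basisf}. For \emph{existence} of the expansion, one starts from $\ip{f}{g}=\int_X\ip{f}{\zeta_x}\ip{\zeta_x}{g}\,d\mu$, uses $\zeta_x=M^{-1}\omega_x$ to rewrite $\ip{\zeta_x}{g}=\ip{\omega_x}{(M^{-1})^\times g}$, and then replaces $g$ by $M^\times g$; this yields $\ip{Mf}{g}=\int_X\ip{f}{\zeta_x}\ip{\omega_x}{g}\,d\mu$ for all $f,g\in\D$, so that every element of the form $Mf$ is expanded by $\omega$ with coefficient function $x\mapsto\ip{f}{\zeta_x}$. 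To reach an arbitrary $F\in\D$ I would write $F=M(M^{-1}F)$ with $M^{-1}F\in\D^\times$ and push the Gel'fand expansion of the distribution $M^{-1}F$ forward through $M$, the candidate coefficient being $\xi_F(x)=\ip{M^{-1}F}{\zeta_x}$ interpreted through the synthesis of the Gel'fand basis. One then checks that $\ip{F}{g}=\int_X\xi_F(x)\ip{\omega_x}{g}\,d\mu$ (a direct manipulation of the displayed identity) and that $F\mapsto\xi_F(x)$ is $t$-continuous — the latter being automatic once $\xi_F$ is known to be a genuine function, since it factors through the continuous inclusion $\D[t]\hookrightarrow\D^\times[t^\times]$, the continuous operator $M^{-1}$, and the continuous $\zeta$-coefficient functional. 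The real obstacle is showing that $\xi_F$ is an honest $\mu$-measurable function: because $M^{-1}F$ lies in $\D^\times$ and not in general in $\H$, one must first establish that the Gel'fand synthesis operator $T_\zeta$ — an isometry of $L^2(X,\mu)$ onto $\H$ (Proposition \ref{prop_gelfandbasis}) — extends to a bijection from a suitable space of measurable functions onto $\D^\times$, so that $M^{-1}F$ really does possess measurable Gel'fand coefficients. That extension step is where the hypothesis that $M^{-1}$ is continuous and everywhere defined on $\D^\times$ is genuinely used, and it is the point I expect to require the most care.
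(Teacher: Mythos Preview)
Your handling of (i)--(iii), (v), (vi) is exactly the paper's: everything is read off from the identity $\int_X|\ip{f}{\omega_x}|^2\,d\mu=\|M^\times f\|^2$, and the $\mu$-independence argument via $M^\times\D=\D$ and the $\mu$-independence of $\zeta$ is precisely what the paper invokes for (v) and (vi).

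On (iv) you and the paper diverge. The paper's argument is much shorter than yours. After recording (via K\"othe) that $(M^\times)^{-1}=(M^{-1})^\times\in{\mc L}(\D)$, it takes the identity $\ip{f}{M^\times g}=\int_X\ip{f}{\zeta_x}\ip{\omega_x}{g}\,d\mu$ (valid for $f,g\in\D$) and simply substitutes $f\to M^{-1}f$, writing
\[
\ip{f}{g}=\ip{M^{-1}f}{M^\times g}=\int_X\ip{M^{-1}f}{\zeta_x}\ip{\omega_x}{g}\,d\mu,
\]
and declares the continuity of $f\mapsto\ip{M^{-1}f}{\zeta_x}$ ``easy''. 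The paper does not comment on the fact that $M^{-1}f$ lies a priori only in $\D^\times$, so that the symbol $\ip{M^{-1}f}{\zeta_x}$ is a pairing of two elements of $\D^\times$; nor does it check measurability of the coefficient or the uniqueness clause in Definition~\ref{basisf}. In other words, the ``real obstacle'' you identify is passed over in one line. Your concern is legitimate, and your proposed route through an extension of $T_\zeta$ beyond $L^2(X,\mu)$ is a reasonable way to try to make it rigorous --- but be aware that the paper itself does not supply that argument, so if your goal is to reproduce the paper's proof you should simply perform the formal substitution and move on.
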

\begin{proof} Since $M$ is weakly continuous $M^\times:\D \to \D$ exists and is weakly continuous. Then we have
\begin{equation}\label{eqn_trans}\int_X|\ip{f}{\omega_x}|^2d\mu =\int_X|\ip{f}{M\zeta_x}|^2d\mu =\int_X|\ip{M^\times f}{\zeta_x}|^2d\mu =\|M^\times f\|^2.\end{equation}
From this equality it follows immediately that $\omega$ is a Bessel distribution map and also the equivalence stated in (i).

As for (v), by \eqref{eqn_trans} it follows that $\omega$ is a distribution frame. We only need to prove that $\omega$ is $\mu$-independent. But this follows immediately from the $\mu$-independence of $\zeta$ and from the assumption $M^\times \D=\D$.

For proving (vi) we can apply Proposition \ref{prop_gelfandbasis} and notice as in (v) that $\omega$ is $\mu$-independent.

Now we prove (ii).
Assume that $\omega$ is a distribution frame. Then, there exist $A,B>0$ such that
$$ A\|f\|^2 \leq \int_X|\ip{f}{\omega_x}|^2d\mu \leq B \|f\|^2, \quad \forall f\in \D $$
or, equivalently
\begin{equation}\label{eqn_frame_4} A\|f\|^2 \leq \|M^\times f\|^2 \leq B \|f\|^2, \quad \forall f\in \D .\end{equation}
The previous inequalities show that $M^\times$ is continuous from $\D[\|\cdot\|]$ into $\D[\|\cdot\|]$ and injective. Moreover $(M^\times)^{-1}: {\sf Ran\, M^\times}[\|\cdot\|]\to \D[\|\cdot\|]$ is continuous.
Conversely, if $M^\times$ is continuous from $\D[\|\cdot\|]$ into $\D[\|\cdot\|]$ and it has an inverse, continuous from ${\sf Ran}\, M^\times[\|\cdot\|]$ into $\D[\|\cdot\|]$, it is clear that there exist $A,B>0$ such that
\eqref{eqn_frame_4} holds.

Let us now we prove (iii). If $M^\times$ is isometric, then for every $f\in \D$,
$$\|f\|^2=\|M^\times f\|^2= \int_X |\ip{f}{\omega_x}|^2d\mu.$$
Thus, $\omega$ is a Parseval distribution frame.

Conversely, if $\omega$ is a Parseval distribution frame, then, by \eqref{eqn_trans},  $\|f\|=\|M^\times f\|$, for every $f\in \D$.

Finally we prove (iv). Let us assume that $M$ has a continuous everywhere defined inverse, then by \cite[\S 38,n.4, (8)]{Kothe} $M^\times$ has a continuous inverse everywhere defined in $\D$ and $(M^\times)^{-1} =(M^{-1})^\times$.
Hence,  from the equality
$$ \ip{f}{g}=\int_X\ip{f}{\zeta_x} \ip{\zeta_x}{g}d\mu,\quad \forall f,g\in \D $$
we obtain
\begin{align*} \ip{f}{M^\times g}&=\int_X\ip{f}{\zeta_x} \ip{\zeta_x}{M^\times g} d\mu \\ &= \int_X\ip{f}{\zeta_x} \ip{M\zeta_x}{ g} d\mu
= \int_X\ip{f}{\zeta_x} \ip{\omega_x}{ g}d\mu.
\end{align*}
Hence $$\ip{M^{-1}f}{M^\times g} =\int_X\ip{M^{-1}f}{\zeta_x} \ip{\omega_x}{ g}d\mu.$$
Therefore,
$$\ip{f}{ g} =\int_X\ip{M^{-1}f}{\zeta_x} \ip{\omega_x}{ g}d\mu.$$
The continuity of the functional $f\to \ip{M^{-1}f}{\zeta_x}$ is easy.
\end{proof}

\berem As is known frames and Riesz bases in a Hilbert space can be characterized through the action of some linear operator $T$ on the elements of a orthonormal basis $\{e_n\}$. More precisely, a sequence $\{f_n\}$ is a frame if and only if $f_n=Te_n$ with $T$ bounded and surjective \cite[Theorem 5.4.4]{christensen} and $\{f_n\}$ is a Riesz basis if and only if $f_n=Te_n$ with $T$ bounded and bijective  \cite[Definition 3.3.1]{christensen}. It is natural to pose the question as to whether similar results hold for measurable distribution functions. Proposition \ref{prop_320} provides a partial answer to this question, since the converse of the statements (iv), (v), (vi) have not yet been established. The discussion before Proposition \ref{prop_320} about (b) of Proposition \ref{prop_rieszbasis} shows that the existence of an operator of ${\mc L}(\D^\times)$ transforming a Gel'fand distribution basis into a Riesz one requires additional assumptions. We leave this problem open.
\enrem

\section{Distribution frames and operators}\label{sect_4}
{In this section we will discuss the interplay of frames and operators in some specific situation. The operators considered in Example \ref{ex_gelf} and in Example \ref{ex_mult} are closely related with the continuous {\em frame multipliers} considered by Balasz {\em et al.} in \cite{balaszetal}. In both cases in fact the operators under consideration are determined by some sufficiently regular function which defines a multiplication operator.
Let us discuss some simple cases.}
\beex \label{ex_gelf}
The simplest operators one can define when having at hand an orthonormal basis $\{e_n\}$ in Hilbert spaces are {diagonal operators} with respect to $\{e_n\}$. Let us suppose that $\zeta: x\in X\to \zeta_x \in \D^\times$ is a Gel'fand distribution basis. Then, {\em diagonal} operators can be introduced as follows.

Let $\alpha$ be a (complex valued) measurable function such that $$\int_X |\alpha(x)\ip{f}{\check{\zeta}_x}|^2d\mu <\infty, \; \forall f\in \D.$$
We define a linear operator $A$ on $\D$ by
$$ Af= \int_X \alpha(x)\ip{f}{\zeta_x}\zeta_xd\mu, \quad  f\in \D.$$
If we put
$$ A^\dag g= \int_X \overline{\alpha(x)}\ip{g}{\zeta_x}\zeta_xd\mu, \quad  f\in \D$$
one can easily check that
$$ \ip{Af}{g}= \ip{f}{A^\dag g}, \quad \forall f,g\in \D.$$
Hence $A$ is a closable operator in $\H$.
The domain of its closure $\overline{A}$ is
$$ D(\overline{A})=\left\{ f \in \H: \int_X |\alpha(x)\ip{f}{\check{\zeta}_x}|^2d\mu <\infty\right\},$$
where $\check\zeta$ is understood in the sense of the approximation described by \eqref{eqn_check}.

The operator $A$ is bounded if and only if $\alpha \in L^\infty (X, \mu)$.
The spectrum $\sigma(\overline{A})$ is given by the closure of the essential range of $\alpha$ (see, for instance, \cite[p. 15]{davies}); that is, the set of $z \in {\mb C}$ such that
$$ \mu\{x: |\alpha(x)-z|<\epsilon\} >0, \quad \forall \epsilon >0.$$
Moreover, if $A\in \LD$,  for almost every $x\in X$, $\alpha(x)$ is a generalized  eigenvalue of $A$.
Indeed,  in this case we have, on the one hand,
\begin{equation}\label{eqn_expA}\ip{Af}{g} =\ip{f}{A^\dag g}=\int_X \ip{f}{\zeta_x}\ip{\zeta_x}{A^\dag g}d\mu= \int_X \ip{f}{\zeta_x}\ip{(A^\dag)^\times\zeta_x}{ g}d\mu, \; \forall f,g \in \D\,\end{equation}
and, on the other hand,
$$\ip{Af}{g}=\int_X \alpha(x)\ip{f}{\zeta_x}\ip{\zeta_x}{g}d\mu,\quad \forall f,g \in \D.$$
Hence, for almost every $x\in X$,
$$ \ip{(A^\dag)^\times\zeta_x}{g} = \alpha(x) \ip{\zeta_x}{g}, \quad\forall g \in \D.$$

\beex \label{ex_mult} In analogy with what has been done for Riesz bases in \cite{bit_jmp}, operators can be naturally defined also by taking a Riesz distribution basis $\omega$ and its dual $\theta$. Indeed, we define a linear operator $H$ in this way.
Let $\alpha$ be a (complex valued) measurable function such that $$\int_X |\alpha(x)\ip{f}{\check{\theta}_x}|^2d\mu <\infty, \; \forall f\in \D.$$
We define a linear operator $H$ on $\D$ by
$$ Hf= \int_X \alpha(x)\ip{f}{\theta_x}\omega_x d\mu. $$
If we put
$$ H^\dag g= \int_X \overline{\alpha(x)}\ip{g}{\omega_x}\theta_xd\mu, \quad  g\in \D,$$
then,
$$ \ip{Hf}{g}= \ip{f}{H^\dag g}, \quad \forall f,g\in \D.$$
Hence $H$ is a closable operator in $\H$.
The domain of its closure $\overline{H}$ is
$$ D(\overline{H})=\left\{ f \in \H: \int_X |\alpha(x)\ip{f}{\check{\theta}_x}|^2d\mu <\infty\right\}.$$
where  $\check{\theta}$  is understood in the sense of \eqref{eqn_check}.
As well as in the case of Example \ref{ex_gelf}, if $H\in \LD$, then,  for almost every $x\in X$, $\alpha(x)$ is a generalized  eigenvalue of $H$ with generalized eigenvector $\omega_x$.
Indeed, with computations very similar to those made in Example \ref{ex_gelf} one gets,
for almost every $x\in X$,
$$ \ip{(H^\dag)^\times\omega_x}{g} = \alpha(x) \ip{\omega_x}{g}, \quad\forall g \in \D.$$
Analogously one can prove that for almost every $x\in X$, $\overline{\alpha(x)}$ is a generalized  eigenvalue of $H^\dag$ with generalized eigenvector $\theta_x$.
Let $\zeta$ be a Gel'fand distribution  basis. Using the operator $W$ in (b) of Proposition \ref{prop_rieszbasis}, for $f\in \D$ one has
$W^{-1}f= \int_X \ip{W^{-1}f}{{\check{\zeta_x}}}\zeta_x d\mu.$ Hence $f=WW^{-1}f= \int_X \ip{W^{-1}f}{{\check{\zeta_x}}}\omega_x d\mu.$ But we also have
$f= \int_X \ip{f}{\theta_x}\omega_x d\mu.$ Hence, $\ip{W^{-1}f}{{\check{\zeta_x}}}=\ip{f}{\theta_x}$ $\mu$-a.e. Therefore,
$$Hf= \int_X \alpha(x) \ip{f}{\theta_x}\omega_x d\mu= W\int_X \alpha(x) \ip{f}{\theta_x}\zeta_x d\mu= W\int_X \alpha(x) \ip{W^{-1}f}{{\check{\zeta_x}}}\zeta_x d\mu.$$
Hence $W^{-1}f \in D(\overline{A})$, where $A$ is the operator defined by $\zeta$ as in Example \ref{ex_gelf}, and $Hf=W\overline{A}W^{-1}f$. From this one easily gets that $\overline{H}=W\overline{A}W^{-1}$.
Thus $\overline{H}$ and $\overline{A}$ are similar and therefore their spectra are the same.
This example, even though quite simple, has a certain relevance in the so-called {\em non-hermitian quantum mechanics} (see\cite{book1, book2} for overviews) where a non-hermitian hamiltonian is in fact defined as an operator similar to a self-adjoint hamiltonian. The similarity operator (in our example $W$) modifies the geometry of the Hilbert space and for this reason is named the {\em metric} operator.
\enex

\beex \label{ex_atomic}Another interesting situation where the interaction between frames and operators appears to play an important role arises when the action of an operator can be described through a Bessel sequence. This leads to the notion of an {\em atomic systems} for a given operator $A$, introduced first by Feichtinger and Werther in \cite{feich} and developed by \mbox{${\rm G\check{a}vru\c{t}a}$} in \cite{gavruta}.
Generalizing the notion given in those papers, we call a measurable distribution map $\omega$ an {\em atomic distribution map} for a given operator $A\in \LD$ if,
\begin{itemize}
\item[(a)] $\omega$ is a Bessel distribution map;
\item[(b)] for every $f\in \D$ there exists a function $\xi_f \in L^2(X, \mu)$ such that
$$ \ip{Af}{g} = \int_X \xi_f(x) \ip{\omega_x}{g}d\mu,\; \forall g\in \D;$$
\item[(c)] for every $x\in X$, the linear functional $f\to \xi_f(x)$ is continuous on $\D[t]$.
\end{itemize}
Once for every $f\in \D$ a function $\xi_f$ is selected so that (b) holds, using (c) one deduces that there exists $\theta_x\in \D^\times$ such $\xi_f(x)=\ip{f}{\theta_x}$. Hence,
$$ \ip{Af}{g} = \int_X \ip{f}{\theta_x} \ip{\omega_x}{g}d\mu,\; \forall f,g\in \D.$$

It is worth remarking that an atomic distribution map for $A\in \LD$ can be constructed starting  from a Gel'fand distribution basis $\zeta$. Indeed, putting $\omega_x:=(A^\dag)^\times \zeta_x$, $x\in X$ and proceeding as in \eqref{eqn_expA}, we get, for $f\in \D$,
$$\ip{Af}{g}=\int_X \ip{f}{\zeta_x}\ip{(A^\dag)^\times\zeta_x}{ g}d\mu=\int_X \ip{f}{\zeta_x}\ip{\omega_x}{ g}d\mu , \; \forall g \in \D$$
and
$$ \int_X |\ip{f}{\omega_x}|^2d\mu =\|A^\dag f\|^2 <\infty$$
\enex
Example \ref{ex_atomic} provides also some motivations for going beyond frames when looking for distribution functions that can {\em locally} describe  vectors of some domain $\D$ in Hilbert spaces or the action of operators defined on it. For sequences and Hilbert-space valued measurable functions there are several generalizations such as {\em pseudoframes} \cite{li_ogawa} or {\em reproducing pairs} \cite{bal_ms, jpa_ms_ct} . We will discuss this topic in the next section.

\enex

\section{Coefficient spaces and duality}\label{sect_5}
A Bessel distribution map $\omega$, as defined in Section \ref{sect_3}, is characterized by the fact that the analysis operator $D_\omega: f\in \D \to \ip{f}{\omega_x}$ maps every $f\in \D$ into $L^2(X, \mu)$.
As we shall see in what follows the analysis operator and the synthesis operators can be introduced for an arbitrary $\D^\times$-valued function $\omega$, and they determine some spaces of measurable functions that, under certain circumstances, enjoy nice duality properties. In the case of Hilbert spaces the problem of {\em coefficient spaces} was studied \cite{jpa_ms_ct} and then generalized in \cite{jpa_ct_rppip} to rigged Hilbert spaces and more general {\sc Pip}-spaces. A short summary of this construction is given below for the reader's convenience.

\subsection{Construction of coefficient spaces}
Let  $\omega:x\in X\to \D^\times$ be a weakly measurable function.
We denote by $V_\omega$ the space of all measurable functions $\xi: X\to {\mb C}$ such that the integral
$$ F_\omega(g):=\int_X \xi(x) \ip{\omega_x}{g} d\mu $$ exists for every $g\in \D$ and defines a continuous conjugate linear functional on $\D[t].$
As usual we do not distinguish functions $\xi$ which differ on $\mu$-null subsets of $X$.
We refer sometimes to $V_\omega$ as the coefficient space of $\omega$.

Then we can define a linear map
$T_\omega:V_\omega \to \D^\times$, which we call again the {\em synthesis operator} by the following relation
\begin{equation}\label{eqn_Tomega} \ip{T_\omega\xi}{g}= \int_X \xi(x) \ip{\omega_x}{g} d\mu, \quad \xi \in V_\omega, g \in \D.\end{equation}

Set
 $W_\omega = V_\omega /{\rm Ker}\,T_\omega$ and $[\xi]_\omega:=\xi + {\rm Ker}\,T_\omega$. Obviously, $\omega$ is $\mu$-independent if, and only if, ${\rm Ker}\,T_\omega=\{0\}$; that is, if and only if, $W_\omega = V_\omega$.

We introduce on $W_\omega$ a topology $\tau_\omega$ by means of the following set of seminorms:
\begin{equation}\label{seminW} p_\M ([\xi]_\omega) = \sup_{g\in \M} |\ip{g}{T_\omega\xi}|  = \sup_{g\in \M} \left| \int_X \overline{\xi(x)} \ip{g}{\omega_x} d\mu\right|,\end{equation}
where $\M$ runs in the family of bounded subsets of $\D[t]$. 

By \eqref{semin_Dtimes} and \eqref{seminW} the following equality holds {for all bounded subsets $\M\subset\D$}:
\begin{equation}\label{eqn_twotopol}
p_\M ([\xi]_\omega)= q_\M ({ T}_\omega\xi), \quad \forall \xi \in V_\omega,
\end{equation}
i.e., {${ \hat{T}}_\omega[\xi]_\omega:={ T}_\omega\xi$} is continuous, injective and, if $W_\omega$ is complete, it has closed image and its inverse is also continuous (but not necessarily everywhere defined).

As shown in \cite[Theorem 3.4]{jpa_ct_rppip},
if $\D[t]$ is a reflexive space and $\omega:X\to \D^\times$ a weakly continuous map, a linear functional $H$ on $W_\omega[\tau_\omega]$ is continuous if, and only if, there exists $g\in \D$ such that
$$ H([\xi]_\omega) = \int_X \xi(x) \ip{\omega_x}{g} d\mu, \quad \forall \xi \in V_\omega;$$
i.e., $H=H_g$.
In other words, the dual space $W_\omega^*$ of $W_\omega$, with respect to the sesquilinear form given by the $L^2$ inner product, can be identified with a space $E$ of measurable functions containing all functions $\{\ip{g}{\omega_x}, g\in \D\}$. Our next scope is to determine $E$ explicitly, at least under certain conditions. In what follows, if $E$ is a linear space of measurable functions, we denote by $\overline{E}$ the linear space consisting of the complex conjugate functions of $E$.

{
\berem
Since $\hat{T}_\omega $ is continuous from $W_\omega$ into $\D^\times$ and $\D$ is reflexive, it admits an adjoint $\hat{T}_\omega^\dag: \D \to W_\omega^*$, the dual space of $W_\omega$.

If we define a linear operator $D_\omega$ by $(D_\omega g)(x)=\ip{g}{\omega_x}$, the previous equation reads as
$$\ip{[\xi]_\omega}{\hat{T}_\omega^\dag g}=\int_X \xi(x) \overline{(D_\omega g)(x)} d\mu, \quad \forall \xi \in V_\omega, \,g\in \D.$$
We call $D_\omega$, as customary, the {\em analysis operator} associated to $\omega$.

\enrem

\beex
Let us consider the space of tempered distributions $\mathcal S^\times(\mathbb R)$, and  $\omega_x\in \mathcal S^\times(\mathbb R)$ as the Dirac delta centered at $x$: i.e. $\omega_x=\delta_x$. Then, the coefficient space $V_\delta$ is defined as  the space of all measurable functions $\xi:\mathbb R\rightarrow\mathbb C$ such that $\xi(x)g(x)\in L^1(\mathbb R)$ for all $g(x)\in \mathcal S(\mathbb R)$ and such that $F_\delta(g):=\int\xi(x)g(x)d\mu$ is continuous, i.e. $F_\delta\in\mathcal S^\times(\mathbb R)$. Let us consider the set $C_M(\mathbb R)$   of polynomially bounded continuous  functions  on $\mathbb R$, i.e. $f\in C_M(\mathbb R)$ if and only if there exists $C>0$ and $N\in\mathbb N$ such that
$$
|f(x)|\leq C [1+x^2]^N, \quad \forall x\in\mathbb R.
$$
We prove that $\xi(x)\in V_\delta$ if, and only if, there exists  a  $f\in C_M(\mathbb R)$ and $k\in\mathbb N$ such that $\xi(x)=\partial^kf(x):=f^{(k)}(x)$ (the derivative is intended in \textit{weak} sense). Indeed, let us consider a $k$-times differentiable function $f\in C_M(\mathbb R)$. By definition one has
$$
\int_{\mathbb R} f^{(k)}(x)g(x)d\mu=(-1)^k\int_{\mathbb R} f(x) g^{(k)}(x)d\mu, \quad\forall g\in\mathcal S(\mathbb R).
$$
The integral on the right hand side is convergent; then, $f^{(k)}(x)g(x)\in L^1(\mathbb R)$ for all $g(x)\in\mathcal S(\mathbb R)$. In this way, a functional $F_\delta$ on $\mathcal S(\mathbb R)$ is defined. Its continuity follows from an easy  estimate:
$$
{\Bigr |}\int_{\mathbb R} f(x) g^{(k)}(x)d\mu{\Bigl |}\leq\int_{\mathbb R}|(1+x^2)^{-1} [(1+x^2)f(x) g^{(k)}(x)]|d\mu\leq\pi C\sup_{x\in\mathbb R}|(1+x^2)^{N+1} g^{(k)}(x)|.
$$
Then $f^{(k)}(x)\in V_\delta$. Conversely, let us consider $F_\delta\in\mathcal S^\times(\mathbb R)$. Then, by the regularity theorem of distributions \cite[Vol. I]{reed1}, there exists $f\in C_M(\mathbb R)$ and $k\in\mathbb N$ such that
$$
F_\delta[g]:=\int_{\mathbb R}\xi(x)g(x)d\mu=(-1)^k\int_{\mathbb R}f(x) g^{(k)}(x)d\mu,\quad\forall g\in\mathcal S(\mathbb R)
.$$
This means that $f(x)$ is $k$-times differentiable and that $\xi(x)=f^{(k)}(x)$.\\

\enex

\subsection{Compatible pairs}\label{subsect_compatible}
If $\omega, \theta$ are two weakly measurable $\D^\times$-valued functions, then one can formally define a sesquilinear form on $\D\times \D$ by
\begin{equation}\label{eqn_lambda} \Omega_{\theta, \omega} (f, g) = \int_X \ip{f}{\theta_x} \ip{\omega_x}{g}d\mu, \quad f,g \in \D.\end{equation}
 If $\Omega_{\theta, \omega}$ is well defined on $\D\times\D$ and jointly continuous,
  there exists an operator $S_{\theta, \omega}$, which maps $\D[t]$ into $\D^\times[t^\times]$ continuously,  such that:
\begin{equation}\label{eqn_defnS} \ip{S_{\theta, \omega} f}{g} = \int_X \ip{f}{\theta_x} \ip{\omega_x}{g}d\mu, \quad f,g \in \D.\end{equation}

In  this case, as proved in \cite[Theorem 3.6]{jpa_ct_rppip}, the dual $W_\omega^*$ can be identified with a closed subspace of $\overline{W}_\theta$, the space of conjugates of elements of $W_\omega$. On the other hand, in \cite{jpa_ms_ct} it was proved that, for a {\em reproducing pair} of $\H$-valued weakly measurable functions $\omega, \theta$ (this means that $\Omega_{\theta, \omega}$ is bounded and the corresponding operator $S_{\theta, \omega}$ is bounded and has bounded inverse), the  spaces $W_\omega$ and $W_\theta$ are both Hilbert spaces in conjugate duality to each other. The analogous statement for $\D^\times$-valued measurable functions was left open in \cite{jpa_ct_rppip}. In this section we want to discuss further this question.

{\bedefi Let $\omega, \theta$ be   weakly measurable $\D^\times$-valued functions such that the sesquilinear form $\Omega_{\theta, \omega}$ in \eqref{eqn_lambda} is well defined in $\D\times \D$ and jointly continuous. We say that $\omega, \theta$ are {\em compatible} if $W_\omega^*$ is topologically isomorphic to $\overline{W}_\theta$ and $W_\theta^\times$ is topologically isomorphic to $W_\omega$.
Thus we identify $W_\omega^*$ with $\overline{W}_\theta$ and $W_\theta^\times$ with $W_\omega$.
\findefi

This definition implies that the spaces $W_\omega$ and $W_\theta$ are reflexive spaces enjoying the duality properties mentioned above which we write shortly as $W_\omega^*\approx \overline{W}_\theta$ and $W_\theta^\times\approx W_\omega$. Then we have
$$ W_\theta^* \approx \overline{W_\theta^\times}\approx \overline{W_\omega^*}\;\mbox{ and }\; W_\omega^\times \approx \overline{W_\theta^*}\approx W_\theta.$$
This proves that  $(\omega,\theta)$ is a compatible pair if and only if $(\theta,\omega)$ is a compatible pair.  Theorem \ref{thm_compatible} below makes this symmetry clearer, taking also into account the fact that
{the map $S_{\theta, \omega}$ has an adjoint $S_{\theta, \omega}^\dag$ defined by
$$ \ip{S_{\theta, \omega}^\dag f}{g}= \overline{\ip{S_{\theta, \omega}g}{f}}, \quad f,g \in \D.$$
An easy computation shows that
$$ \ip{S_{\theta, \omega}^\dag f}{g}=\int_X \ip{f}{\omega_x} \ip{\theta_x}{g}d\mu, \quad f,g \in \D.$$
Hence, $S_{\theta, \omega}^\dag=S_{\omega, \theta}$.}

\begin{thm} \label{thm_compatible} Let $\D[t]$ be a reflexive Fr\`{e}chet space and $\omega, \theta$ two weakly measurable functions with $\omega, \theta$  $\mu$-independent and total. {If $V_\omega$ is complete}, then the following statements are equivalent:
\begin{itemize}
\item[(i)] ($\omega,\theta$) is a {compatible pair};

\item[(ii)]The operator $S_{\theta, \omega}$ is a topological isomorphism of $\D[t]$ onto $\D^\times[t^\times]$.
\end{itemize}
\end{thm}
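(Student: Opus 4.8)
The plan is to factor $S_{\theta,\omega}$ through the synthesis operator $\hat T_\omega$ of $\omega$ and to read off both implications from that factorization. Writing $D_\theta$ for the analysis operator of $\theta$, $(D_\theta f)(x)=\ip{f}{\theta_x}$, and $D_\omega$ for that of $\omega$, joint continuity of $\Omega_{\theta,\omega}$ (a standing assumption for speaking of compatibility at all) gives $D_\theta f\in V_\omega$ and $D_\omega f\in V_\theta$ for every $f\in\D$, with $T_\omega(D_\theta f)=S_{\theta,\omega}f$ and $T_\theta(D_\omega f)=S_{\omega,\theta}f=S_{\theta,\omega}^\dag f$; moreover, by \eqref{eqn_twotopol}, $p_\M([D_\theta f]_\omega)=q_\M(S_{\theta,\omega}f)$, so $D_\theta$ maps $\D[t]$ continuously into $W_\omega[\tau_\omega]$ (and likewise for $D_\omega$). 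Thus $S_{\theta,\omega}=\hat T_\omega\circ D_\theta$ and $S_{\omega,\theta}=\hat T_\theta\circ D_\omega$. By \eqref{eqn_twotopol} again, $\hat T_\omega$ and $\hat T_\theta$ are topological isomorphisms of $W_\omega$, respectively $W_\theta$, onto their ranges in $\D^\times$, and $\mu$-independence of $\omega$ together with completeness of $V_\omega$ makes ${\sf Ran}\,\hat T_\omega$ closed. Finally, representing every continuous functional on $W_\omega$ (resp.\ $W_\theta$) via \cite[Theorem 3.4]{jpa_ct_rppip}, the embeddings $W_\omega^*\hookrightarrow\overline{W}_\theta$ and $W_\theta^\times\hookrightarrow W_\omega$ of \cite[Theorem 3.6]{jpa_ct_rppip} have images $\{\,\overline{D_\omega g}:g\in\D\,\}$ and $\{\,D_\theta g:g\in\D\,\}$; hence compatibility of $(\omega,\theta)$ is equivalent to $D_\omega$ and $D_\theta$ being onto $W_\theta$ and $W_\omega$.

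The crucial point, which I would isolate as a lemma, is that $\hat T_\omega$ is in fact surjective, hence a topological isomorphism of $W_\omega$ onto $\D^\times$. The argument is by contradiction: if ${\sf Ran}\,\hat T_\omega$ were a closed proper subspace of $\D^\times$, then — using reflexivity of $\D[t]$ — Hahn--Banach would furnish $g\in\D$, $g\ne0$, with $\ip{\hat T_\omega[\xi]_\omega}{g}=\int_X\xi(x)\ip{\omega_x}{g}\,d\mu=0$ for every $\xi\in V_\omega$; applying this with $\xi=D_\theta h$, $h\in\D$, and rearranging gives $\int_X\ip{\omega_x}{g}\ip{h}{\theta_x}\,d\mu=0$ for all $h$, so $\mu$-independence of $\theta$ forces $\ip{\omega_x}{g}=0$ $\mu$-a.e., and totality of $\omega$ then gives $g=0$, a contradiction. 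I expect this to be the main obstacle, as it is the only place where completeness of $V_\omega$, reflexivity, $\mu$-independence of $\theta$ and totality of $\omega$ all enter, and it is what turns the purely formal factorization $S_{\theta,\omega}=\hat T_\omega D_\theta$ into information about $D_\theta$ alone.

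Granting the lemma, the two implications are short. For (ii)$\Rightarrow$(i): if $S_{\theta,\omega}$ is a topological isomorphism, then $D_\theta=\hat T_\omega^{-1}S_{\theta,\omega}$ is one of $\D$ onto $W_\omega$, so $W_\theta^\times\approx W_\omega$; also $S_{\omega,\theta}=S_{\theta,\omega}^\dag$ is a topological isomorphism of $\D[t]$ onto $\D^\times[t^\times]$ (the adjoint of such an isomorphism is again one), whence ${\sf Ran}\,\hat T_\theta\supseteq{\sf Ran}\,S_{\omega,\theta}=\D^\times$ forces $\hat T_\theta$ to be an isomorphism onto $\D^\times$ and $D_\omega=\hat T_\theta^{-1}S_{\omega,\theta}$ to be onto $W_\theta$, so $W_\omega^*\approx\overline{W}_\theta$; hence $(\omega,\theta)$ is compatible. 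For (i)$\Rightarrow$(ii): compatibility gives $D_\theta$ onto $W_\omega$, and $D_\theta$ is injective since $\theta$ is total, so $S_{\theta,\omega}=\hat T_\omega D_\theta$ is a continuous linear bijection of $\D$ onto $\D^\times$; I would then conclude by the closed graph theorem — its inverse has closed graph and maps the barrelled space $\D^\times$ into the Fr\'echet space $\D$ — that $S_{\theta,\omega}$ is a topological isomorphism of $\D[t]$ onto $\D^\times[t^\times]$.
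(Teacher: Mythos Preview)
Your argument is correct and shares its central technical ingredient with the paper's proof: the surjectivity of $\hat T_\omega$ onto $\D^\times$, argued via closedness of the range (from completeness of $V_\omega$ and \eqref{eqn_twotopol}) plus density (Hahn--Banach, testing against $\xi=D_\theta h$ and invoking $\mu$-independence of $\theta$ and totality of $\omega$). This is exactly what the paper proves inside its (i)$\Rightarrow$(ii) argument.

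The difference is organizational. You isolate the factorization $S_{\theta,\omega}=\hat T_\omega D_\theta$ and the surjectivity lemma up front, then read off both implications from it; this makes the (ii)$\Rightarrow$(i) direction short, since $D_\theta=\hat T_\omega^{-1}S_{\theta,\omega}$ and $D_\omega=\hat T_\theta^{-1}S_{\omega,\theta}$ are immediately onto. The paper instead handles (ii)$\Rightarrow$(i) by constructing the duality map $Y:V_\omega^*\to V_\theta$ directly and proving that ${\sf Ran}\,C_{\omega,\theta}=V_\theta$ via a separate closed-plus-dense argument (using continuity of $S_{\omega,\theta}^{-1}$ for closedness and surjectivity of $S_{\omega,\theta}$ for density). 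Your route avoids this repetition. For the continuity of $S_{\theta,\omega}^{-1}$ in (i)$\Rightarrow$(ii), the paper appeals to the duality criterion \cite[\S 38, n.4, (5)]{Kothe} (surjectivity of $S_{\theta,\omega}^\dag=S_{\omega,\theta}$), whereas you invoke the closed graph theorem for a map from the barrelled space $\D^\times$ into the Fr\'echet (hence $B_r$-complete) space $\D$; both are valid, and yours is arguably the more self-contained choice. The two proofs are thus close in substance, with your version the more streamlined packaging.
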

\begin{proof}
{We begin with observing that, since $\omega, \theta$ are $\mu$-independent, $W_\omega=V_\omega$,  $W_\theta=V_\theta$, ${\hat{T}_\omega=T_\omega}$.}

(i) $\Rightarrow$ (ii): If ($\omega,\theta$) is a compatible pair, then, as remarked in \eqref{eqn_defnS} the operator $S_{\theta, \omega}$ is continuous from $\D[t]$ into $\D^\times[t^\times]$. The $\mu$-independence of $\omega$  and the fact that $\theta$ is total imply that $S_{\theta, \omega}$ is injective. 
In order to show that $S_{\theta, \omega}$ is also surjective, we begin with proving that ${\sf Ran}{T}_\omega$ is dense in $\D^\times[t^\times]$. Suppose that $g\in \D=\D^{\times\times}$ is such that $\ip{{T}_\omega\xi}{g}=0$ for every $\xi \in V_\omega$. Thus,
$$ \ip{{T_\omega}\xi}{g}=\int_X \xi(x)\ip{\omega_x}{g}d\mu =0, \quad \forall \xi \in V_\omega.$$
In particular this is true if we take $\xi(x)=\ip{f}{\theta_x}$, with $f\in \D$. The $\mu$-independence of $\theta$ implies that $\ip{\omega_x}{g}=0$ for almost every $x\in \RN$. Hence $ g=0$, because $\omega$ is total.

From \eqref{eqn_twotopol} and using the fact that the dual of a Fr\`{e}chet space is complete, it follows easily that ${\sf Ran}{T}_\omega$ is closed in $\D^\times$.
Hence, ${\sf Ran}{T}_\omega=\D^\times$. Therefore, for every $\Phi\in \D^\times$ there exists a unique $\xi\in V_\omega$ such that ${T}_\omega \xi=\Phi$, this implies that
$$ \ip{\Phi}{g}= \int_X \xi(x)\ip{\omega_x}{g}d\mu, \quad \forall g\in \D.$$
Let us now consider the conjugate linear functional $F_\xi$ on $V_\theta$ defined by:
$$F_\xi(\eta)= \int_X \xi(x)\overline{\eta(x)}d\mu, \quad \eta\in V_\theta.$$ Then, there exists $f \in \D$ such that
$$ F_\xi(\eta)= \int_X \ip{f}{\theta_x}\overline{\eta(x)}d\mu, \quad \eta\in V_\theta.$$
Hence, in particular
$$ F_\xi(\ip{g}{\omega_x})= \int_X \ip{f}{\theta_x}\ip{\omega_x}{g}d\mu, \quad \forall g\in \D.$$
In conclusion,
$$ \ip{\Phi}{g}= \int_X \ip{f}{\theta_x}\ip{\omega_x}{g}d\mu, \quad \forall g\in \D.$$
This implies that $\Phi= S_{\theta, \omega}f$.
{It remains to prove that $S_{\theta, \omega}^{-1}$ is continuous. For this we observe that, by a symmetry argument, $S_{\omega, \theta}$ is also surjective. But, as we have seen above, $S_{\theta, \omega}^\dag=S_{\omega, \theta}$. Hence, $S_{\theta, \omega}^\dag\D=\D^\times$. Then, by \cite[\S 38, n.4 (5)]{Kothe}, $S_{\theta, \omega}^{-1}$ is continuous.}

(ii) $\Rightarrow$ (i): Let $H\in V_\omega^\times$; then, there exists $g \in \D$ such that:
$$H(\xi) = H_g(\xi) = \int_X \xi(x) \ip{\omega_x}{g} d\mu, \quad \forall \xi \in V_\omega.$$
We show that $g$ is unique. Suppose that there exists another $g'$ satisfying the same condition.
Then it follows that:
$$ \int_X \xi(x) \ip{\omega_x}{g-g'} d\mu =0, \quad \forall \xi \in V_\omega.$$
In particular, this is true for $\xi(x)= \ip{f}{\theta_x}$, for every $f \in \D$.
Thus:
$$\int_X\ip{f}{\theta_x}\ip{\omega_x}{g-g'} d\mu =0, \quad \forall \xi \in V_\omega.$$
Hence by \eqref{eqn_defnS} we get $\ip{S_{\theta, \omega} f}{g-g'}=0$, for every $f\in \D$. But, since $S_{\theta, \omega}\D=\D^\times$, we conclude that $\ip{G}{g-g'}=0$, for every $G\in \D^\times$ and this, in turn, implies that $g=g'$.
Therefore, we can define a map $J: H\in V_\omega^\times \to g \in \D$, where $g$ is the unique element of $\D$ such that $H=H_g$. This map is an isomorphism of vector spaces. Indeed, it is clearly injective.
On the other hand, if $g \in \D$, the functional:
$$H(\xi) = H_g(\xi) = \int_X \xi(x) \ip{\omega_x}{g} d\mu, \quad  \xi \in V_\omega,$$
is in $V_\omega^\times$ and satisfies $J(H_g)=g$.
It is clear that the function $\eta(x)=\ip{g}{\omega_x}$ is an element of $V_\theta$. Assume that there exists another function $\eta' \in V_\theta$ such that
$$ H(\xi) = \int_X \xi(x) \overline{\eta'(x)} d\mu, \quad  \xi \in V_\omega.$$
This implies that:
$$\int_X \xi(x) \overline{(\eta(x)-\eta'(x))} d\mu=0, \quad \forall \xi \in V_\omega.$$
Take $\xi(x)=\ip{f}{\theta_x}$, $f \in \D$, then $\eta-\eta' \in {\rm Ker}T_\theta$. Hence $\eta = \eta'$.
Then we can define a linear map:
\begin{equation}\label{mapJ} Y: H\in V_\omega^* \to \eta \in V_\theta,\end{equation}
where $V_\omega^*$ denote the dual space of $V_\omega$. The map $Y$ is clearly injective.
As we have seen $T_\omega^\times f=\ip{f}{\omega_x}\in V_\theta$, for every $f\in \D$. Now we put $C_{\omega,\theta}f =T_\omega^\times f$. Hence $C_{\omega,\theta}$ is a linear map of $\D$ into $V_\theta$.
We want to prove that ${\sf Ran\,}C_{\omega,\theta}$ coincides with $V_\theta$.

Let $p$ be a seminorm defining the topology of $\D$. Then, there exists a bounded subset $\M$ of $\D$ and $c>0$ such that
\begin{align*}
p(f)&=p (S_{\omega, \theta}^{-1}S_{\omega,\theta}f)\leq c\, q_\M(S_{\omega,\theta}f)=c\,\sup_{g\in \M}|\ip{S_{\omega,\theta}f}{g}|\\
&=c\,\sup_{g\in \M}\left|\int_X \ip{f}{\omega_x} \ip{\theta_x}{g}d\mu \right|=c\,p_\M(\ip{f}{\omega_x}),
\end{align*}
where we have used the seminorms defined in \eqref{semin_Dtimes} and in \eqref{seminW}.
This inequality easily implies that ${\sf Ran\,}C_{\omega,\theta}$ is closed in $V_\theta$.

Now, we prove that ${\sf Ran\,}C_{\omega,\theta}$ is also dense in $V_\theta$. Were it not so there would be a nonzero continuous linear functional $F$ on $V_\theta$ such that $F(\ip{f}{\omega_x})=0$ for every $f \in \D$.
Hence, there exists $g\neq 0$ such that:
$$ F(\xi)=\int_X\xi(x)\ip{\theta_x}{g}d\mu, \forall \xi \in V_\theta,$$
and therefore
$$F(\ip{f}{\omega_x})=\int_X\ip{f}{\omega_x}\ip{\theta_x}{g}d\mu, \forall f\in \D.$$
This implies that $\ip{S_{\omega,\theta}f}{g}=0$ for all $f\in \D$. But, since $S_{\omega,\theta}\D=\D^\times$, this produces a contradiction.
Let us now consider the map $J$ defined in \eqref{mapJ}. An immediate consequence of the equality ${\sf Ran\,}C_{\omega,\theta}=V_\theta$ is that $J$ is surjective.
This implies that the conjugate dual of $V_\omega$ can be identified with  $\overline{V}_\theta$, the space of conjugates of elements of $V_\theta$.
\end{proof}
\berem Using an argument similar to that of \cite[Proposition 3.10]{gb_ct_riesz} one can prove that, if a topological isomorphism between $\D$ and $\D^\times$ exists, then $\D^\times$ can be made into a Hilbert space. Thus the rigged Hilbert space reduces to a triplet of Hilbert spaces.
\enrem

\section*{Concluding remarks} The discussion in Section \ref{sect_4} gives some hints for introducing the notion of {\em reproducing pairs} of distribution functions. In Hilbert spaces, this means that the operator corresponding to $S_{\theta,\omega}$, which is defined similarly to what we have done here, is an element of $GL(\H)$, the group of bounded operators having bounded inverse in $\H$. Hence, it provides a topological isomorphism of $\H$ onto itself. As we have seen, at least in the case we have considered (the functions $\omega$ and $\theta$ are both $\mu$-independent and total), there is an equivalence between $S_{\theta,\omega}$ being a topological isomorphism of $\D$ onto $\D^\times$ and $\theta, \omega$ forming a compatible pair. This imposes nice conditions of duality between the spaces $W_\omega$ and $W_\theta$. This was exactly what was obtained in \cite{jpa_ms_ct} in the case of reproducing pairs  taking their values in Hilbert spaces and also discussed (but not completely achieved)  in \cite{jpa_ct_rppip} in the case that the functions take values in rigged Hilbert spaces (or, more generally, in partial inner product spaces). A complete identification of the notions of compatible pairs and reproducing pairs is however not yet obtained. We hope to come back to this question in a future paper.

\section*{Acknowledgement} {The authors thank the referees for their useful comments and suggestions. This paper has been made within the framework of the project INdAM-GNAMPA 2018 {\em Alcuni aspetti di teoria spettrale di operatori e di algebre; frames in spazi di
Hilbert rigged. }}

\bibliographystyle{amsplain}

\begin{thebibliography}{99}
\bibitem{AAG_book} S. T. Ali, J. P. Antoine and J. P. Gazeau, {\em Coherent states, Wavelet and Their generalizations}, Springer, II ed. 2014.
\bibitem{AAG_paper} S. T. Ali, J. P. Antoine and J. P. Gazeau, Continuous frames in Hilbert spaces, Annals
of Physics, {\bf 222} (1993) 1-37.
\bibitem{ait_book} J.-P. Antoine, A. Inoue, C. Trapani, {\em Partial *-algebras
and their Operator Realizations}, Kluwer, Dordrecht, 2002.
\bibitem{at_pip} J.-P. Antoine, C.Trapani, {\em Partial inner product spaces. Theory and applivcations}, Lecture Notes in Mathematics 1986, Springer, 2009.
\bibitem{jpa_ms_ct} J.-P. Antoine, M.Speckbacher and C. Trapani, {\em Reproducing pairs of measurable functions} , Acta Applicandae Math. {\bf 150} (2017), 81�101.
\bibitem{jpa_ct_rppip}J.-P. Antoine and C.Trapani, {\em Reproducing pairs of measurable functions and partial inner product spaces}, Advances in Operator Theory {\bf 2} (2017) 126-146.
\bibitem{bit_jmp}F. Bagarello, A.Inoue and C.Trapani, {\em Non-self-adjoint hamiltonians defined by Riesz bases}, J. Math. Phys. 55, 033501 (2014).
\bibitem{book1} Bagarello F., Passante R., Trapani C. (Eds.), {\em Non-Hermitian Hamiltonians in Quantum Physics}. Springer Proceedings in Physics, vol 184. Springer (2016).
\bibitem{book2} F. Bagarello, J. P. Gazeau, F. H. Szafraniec and M. Znojil (Eds.) {\em Non-selfadjoint operators in quantum physics: Mathematical aspects}, Wiley  (2015).
\bibitem{balaszetal}P. Balazs, D. Bayer, A. Rahimi, {\em Multipliers for continuous frames
in Hilbert spaces}, J. Phys. A: Math. Theor., {\bf 45}: 244023, 2012.
\bibitem{gb_ct_riesz}G. Bellomonte and C.Trapani, {\em Riesz-like bases in Rigged Hilbert Spaces}, Zeitschr. Anal. Anwen., {\bf 35}, (2016) 243-265.
\bibitem{davies} E.B. Davies, Spectral theory and differential operators, Cambridge Univ. Press  1995.
\bibitem{christensen} O. Christensen, {\em Frames and Bases - An Introductory Course},
Birkh\"auser, 2008.
\bibitem{reed1}M. Reed, B. Simon, {\it Methods of modern mathematical physics}, Vol.I, II, Academic Press, New York, 1980.
\bibitem{feich} H.G.Feichtinger and T. Werther, {\em Atomic systems for subspaces}, in L. Zayed  (Ed.), Proceedings SampTA 2001, Orlando (FL), 2001, pp.163-165
\bibitem{feich2} H.G.Feichtinger and K. Gr{\"o}chenig {\em Gabor Frames and Time-Frequency Analysis
of Distributions}, J. Functional Anal., {\bf 146} (1997), 464-495

\bibitem{feich 3} H.G.Feichtinger and G. Zimmermann, {\em A Banach space of test functions
for Gabor analysis} in H.G. Feichtinger and T. Strohmer, editors, {\em Gabor
analysis and Algorithms - Theory and Applications}. Birkh{\"a}user Boston,
1998.
\bibitem{gavruta} L. \mbox{${\rm G\check{a}vru\c{t}a}$}, {\em Frames and operators}, Appl. Comp. Harmon. Anal. {\bf 32} (2012), 139-144.
\bibitem{gelf3}I.M. Gel'fand, G.E. Shilov and E. Saletan, {\em Generalized Functions\/}, Vol.III,
Academic Press, New York, 1967.
\bibitem{gelf} I. M. Gel'fand and N. Ya. Vilenkin,
{\em Generalized Functions\/}, Vol.IV,
Academic Press, New York, 1964.
\bibitem{gould} G.G. Gould, {\em The  spectral representation of normal operators on a rigged Hilbert space} J. London. Math. Soc. {\bf s1-43} (1968), 745-754.
\bibitem{hosseini} H. Hosseini Giv and M. Radjabalipour, {\em On the structure and properties
of lower bounded analytic frames}, Iran. J. Sci. Technol., 37A3:227-230,
2013.
\bibitem{keiser}G. Kaiser, {\em A friendly guide to wavelets}, Birkh{\"a}user, Boston, 1994.
\bibitem{Kothe} {G. K\"othe}, {\em Topological Vector spaces} II, Springer, New York, 1979.
\bibitem{Kpet} G. Kyriazis,  P. Petrushev, {\em On the construction of frames for spaces of distributions}, J. Functional Anal., {\bf257}, (2009) 2159�2187.
\bibitem{jacobsen} M. S. Jakobsen and J. Lemvig, {\em Density and duality theorems for regular
Gabor frames} J. Funct. Anal., {\bf 270} ( 2015) 229-263,
\bibitem{messiah} A.Messiah {\em Quantum Mechanics}, Volume I, North Holland Publishing Company, 1967
\bibitem{li_ogawa}  S. Li and H.Ogawa, {\em Pseudoframes for Subspaces with Applications}, J.Fourier Anal. Appl.
{\bf 10}(2004) 409-431.
\bibitem{rudin} {W. Rudin} {\em Real and Complex analysis} McGraw-Hill, 1987.
\bibitem{bal_ms_0} M. Speckbacher and P. Balazs, {\em Reproducing pairs and the continuous
nonstationary Gabor transform on LCA groups} J. Phys. A: Math.
Theor., {\bf 48} (2015) 395201.
\bibitem{bal_ms} M. Speckbacher and P. Balazs; {\em  Reproducing pairs and Gabor systems at critical density}, J. Math. Anal. Appl. {\bf 455} (2017), 1072-1087.
\bibitem{bal_ms_2} M. Speckbacher and P. Balasz{\em Frames, their relatives and reproducing
kernel Hilbert spaces} ArXiv: 1704.02818, 2017




\end{thebibliography}

\end{document}